\theoremstyle{plain}
\newtheorem{theorem}{Theorem}[section]{\bfseries}{\itshape}
\newtheorem{proposition}[theorem]{Proposition}{\bfseries}{\itshape}
\newtheorem{definition}[theorem]{Definition}{\bfseries}{\upshape}
\newtheorem{lemma}[theorem]{Lemma}{\bfseries}{\upshape}
{\bfseries}{\upshape}
\newtheorem{corollary}[theorem]{Corollary}{\bfseries}{\upshape}
{\bfseries}{\upshape}
{\bfseries}{\upshape}
{\bfseries}{\upshape}
\newcommand{\bw}{\bigwedge}
\newcommand{\bv}{\bigvee}
\newcommand{\UPP}{\prod_U P}
\newcommand{\g}{\bar{G}_f}
\newcommand{\B}{\textbf{Box}}
\begin{document}
\title{Non-elementary classes of representable posets}
\author{Rob Egrot}
\date{}

\maketitle

\begin{abstract}
A poset is $(\omega,C)$-representable if it can be embedded into a field of sets in such a way that all existing joins, and all existing \emph{finite} meets are preserved. We show that the class of $(\omega,C)$-representable posets cannot be axiomatized in first order logic using the standard language of posets. We generalize this result to $(\alpha,\beta)$-representable posets for certain values of $\alpha$ and $\beta$.    
\end{abstract}

\begin{section}{Introduction}
It is a trivial consequence of Stone's theorem \cite{Stone37} that every Boolean algebra is isomorphic to a field of sets. That is, every Boolean algebra is isomorphic to a subalgebra of a powerset equipped with set theoretic union, intersection and complementation. A generalization of this result can be obtained using the Prime Ideal Theorem for distributive lattices (attributed to Birkhoff). According to this generalization, a lattice is isomorphic to a ring of sets if and only if it is distributive. 

We will use the term \emph{representable} to describe various ordered structures that can be embedded into fields of sets by an embedding that preserves existing finite joins, meets and complements as set theoretic unions, intersections and complements. Note that preservation of existing binary meets and joins guarantees preservation of existing relative complements. For example, using this terminology we would say that every Boolean algebra is representable, and a lattice is representable if and only if it is distributive.  

Further generalization to the case of semilattices has also been made \cite{Bal69,Sch72}. A semilattice is representable if and only if it satisfies an infinite family of axioms generalizing distributivity. The question of whether this infinite family is equivalent to a finite subset has an interesting history. Schein \cite{Sch72} claimed that it is not, but did not provide a proof. Some years later this issue appeared in the literature as open \cite{PawTha80}. It was then shown that a semilattice satisfying only a finite (non-zero) number of the representation axioms must necessarily be infinite, and it was suggested that such a semilattice does not exist \cite{SCLS85}. A suitable infinite semilattice was constructed in \cite{Kea97}, finally settling the question in the negative.  

The class of representable posets is known to be elementary \cite{Egr16}, but direct generalizations of the axioms for the semilattice case fail, and explicit axioms are not known. It is known that the class of posets with representations preserving all or finitely bounded existing finite meets and joins cannot be finitely axiomatized (the result for finitely bounded meets and joins is proved in \cite{Egrsub2}, and the result for all finite meets and joins follows from the semilattice case \cite{Kea97}).      

We can also consider representations where \emph{infinite} meets and/or joins are preserved. A positive result is that a Boolean algebra has a representation preserving arbitrary meets and/or joins if and only if it is atomic \cite[corollary 1]{Abi71}, in which case we say it is \emph{completely representable}. The case of distributive lattices is studied in \cite{EgrHir12}, and that of posets in \cite{Egr16}. Unlike the Boolean case, a lattice representation may preserve all existing meets, for example, but not all existing joins. 

Unfortunately, in the lattice and poset cases there is no simple correspondent to the result for Boolean algebras. In particular, the class of distributive lattices with representations preserving all meets and joins is not closed under elementary equivalence \cite[theorem 3.2]{EgrHir12}, so neither the class of completely representable lattices nor the class of completely representable posets can be elementary. However, both these classes (and various others) can be shown to be pseudoelementary \cite[theorem 3.5]{EgrHir12}, \cite[theorem 5.7]{Egr16}. 

If $\alpha>2$ is a cardinal we say a poset is $(\alpha, C)$-representable if it has a representation preserving all existing meets of cardinality strictly less than $\alpha$ and all existing joins. It is conjectured in \cite{EgrHir12} that the class of $(\omega,C)$-representable lattices is not elementary, and it is conjectured in \cite{Egr16} that the class of $(\alpha, C)$-representable posets is not elementary for all choices of $\alpha$.

Our main result here proves the second conjecture and can be considered a step towards proving the first. We construct a poset that is not $(\omega,C)$-representable, but has an ultrapower that is. This implies that the class of $(\omega,C)$-representable posets is not elementary by \L o\'s' theorem. This construction also covers some other cardinalities relevant to our conjecture, as we see in the final section. These results deal with the cases where the question of the elementarity of the poset representation class is relatively difficult and, taken with previous results, settle the basic question of elementarity for all cardinalities. There remain some unanswered questions, regarding pseudoelementarity, for example, and we provide a table in the final section summarizing the current state of knowledge for easy reference.  

In section \ref{S:basic} we introduce some terminology. The rest of the document is concerned with the construction of a particular poset $P$, and proofs of various technical properties it possesses culminating in a proof of our main result. We assume a working knowledge of ultraproducts and \L o\'s' theorem (see e.g. \cite{ChaKei90} for a textbook treatment of these topics).
 
\end{section}

\begin{section}{Poset representations}\label{S:basic}
If $P$ is a poset, $S\subseteq P$, and $p\in P$ we use the following notational conventions:
\begin{itemize}
\item[$S^\uparrow$:] $=\{q\in P : q\geq s$ for some $s\in S\}$
\item[$p^\uparrow$:] $=\{p\}^\uparrow$
\end{itemize}
Following the notation of \cite{Egr16} we make the following definitions.

\begin{definition}[$(\omega,C)$-morphism]\label{D:mapdefs}
Given posets $P_1$ and $P_2$ we say a map $f\colon P_1\to P_2$ is an $(\omega,C)$-\emph{morphism} if $f(\bw S)=\bw f[S]$ whenever $\bw S$ is defined and $|S|<\omega$, and $f(\bv T)=\bv f[T]$ whenever $\bv T$ is defined. If $f$ is also injective we say it is an $(\omega,C)$-embedding (note that $f$ will always be order preserving).
\end{definition}

\begin{definition}[$(\omega,C)$-representation]\label{D:rep}
An $(\omega,C)$-\emph{representation} of a poset $P$ is an $(\omega,C)$-embedding $h\colon P \to \wp(X)$ for some set $X$ where $\wp(X)$ is considered as a field of sets. When $P$ has a top and/or bottom, we demand that $h$ maps them to $X$ and/or $\emptyset$ respectively.
\end{definition}

\begin{definition}[$(C,\omega)$-ideal]\label{D:wfilter}
$\emptyset\neq S\subset P$ is a $(C,\omega)$-\emph{ideal} of $P$ if it is closed downwards and for all $X\subseteq S$ we have $\bv X\in S$ whenever $\bv X$ is defined, and whenever $Y\subseteq P$ with $|Y|<\omega$ and $\bw Y$ defined in $P$ we have $\bw Y\in S\implies y\in S$ for some $y\in Y$.
\end{definition}  

\begin{definition}[$\omega$-ideal]
$\emptyset\neq S\subset P$ is an $\omega$-\emph{ideal} of $P$ if it is closed downwards, closed under existing finite joins, and whenever $Y$ is a finite subset of $P$ with $\bw Y$ defined and in $S$ there is $y \in Y\cap S$.
\end{definition}

\begin{definition}[Down-separating]
$S\subseteq \wp(P)$ is \emph{down-separating} over $P$ if whenever $p\not\leq q$ there is $X\in S$ with $q\in X$ and $p\notin X$.
\end{definition}

We can use down-separation to characterize the $(\omega,C)$-representable posets. The following is a special case of \cite[theorem 2.7]{Egr16}, and the concept of using separation by sets of ideals and filters to characterize representability is referenced as far back as \cite{ChaHor62}.
\begin{theorem}\label{T:rep}
For a poset $P$ the following are equivalent:
\begin{enumerate}
\item $P$ is $(\omega,C)$-representable,
\item The set of $(C,\omega)$-ideals of $P$ is down-separating over $P$.
\end{enumerate}
\end{theorem}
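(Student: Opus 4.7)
The plan is to establish the two directions separately by dualizing between set-theoretic representations and collections of $(C,\omega)$-ideals, using the standard trick of reading off an ideal from each point of the representing set and vice versa.

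For the direction (1)$\Rightarrow$(2), I would start with an $(\omega,C)$-representation $h\colon P \to \wp(X)$ and, for each $x \in X$, define $S_x = \{p \in P : x \notin h(p)\}$. I would verify directly from the definitions that $S_x$ is a $(C,\omega)$-ideal whenever it is a proper non-empty subset of $P$: downward closure follows from $h$ being order preserving, closure under existing joins from preservation of joins by $h$, and the primeness condition on finite meets from preservation of finite meets by $h$ (so $x\notin \bigcap h[Y]$ forces $x\notin h(y)$ for some $y\in Y$). To get down-separation, given $p\not\leq q$ I would use the fact that $h$, being an embedding that preserves finite meets, is order reflecting, hence $h(p)\not\subseteq h(q)$; then any $x \in h(p)\setminus h(q)$ produces $S_x$ with $q \in S_x$ and $p \notin S_x$, which in particular ensures $S_x$ is non-empty and proper.

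For the direction (2)$\Rightarrow$(1), I would let $\mathcal{I}$ denote the set of $(C,\omega)$-ideals of $P$ and define $h\colon P \to \wp(\mathcal{I})$ by $h(p) = \{S \in \mathcal{I} : p \notin S\}$. Injectivity of $h$ is immediate from down-separation (applied to both $p\not\leq q$ and $q\not\leq p$ when $p\neq q$). Then I would check that $h$ preserves existing finite meets and all existing joins: for a finite $Y$ with $\bw Y$ defined, downward closure of ideals gives $h(\bw Y) \subseteq \bigcap_{y\in Y} h(y)$ and the primeness condition in the definition of $(C,\omega)$-ideal gives the reverse inclusion; for $T$ with $\bv T$ defined, downward closure gives $\bigcup_{t\in T} h(t) \subseteq h(\bv T)$ and closure of ideals under existing joins gives the reverse inclusion. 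Finally, if $P$ has a bottom $\bot$, then non-emptiness and downward closure of ideals force $\bot\in S$ for every $S\in\mathcal{I}$, so $h(\bot)=\emptyset$; dually, if $P$ has a top $\top$, properness and downward closure force $\top \notin S$, so $h(\top)=\mathcal{I}$.

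There is no serious obstacle here; both directions are essentially dual bookkeeping, and the only place that requires genuine attention is making sure the primeness clause in Definition \ref{D:wfilter} is precisely what is needed to translate preservation of \emph{finite} meets (and not more) by the representation. The result really just records that the collection $\{S_x : x \in X\}$ and the canonical map into $\wp(\mathcal{I})$ are inverse constructions up to the separation property, so once the correspondences are set up the verification reduces to unwinding the definitions of ideal and of $(\omega,C)$-morphism.
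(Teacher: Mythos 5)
The paper does not actually prove this theorem: it is quoted as a special case of \cite[theorem 2.7]{Egr16}, so there is no internal argument to compare against. Your proof is the standard correspondence and is essentially the argument given in that reference: read off an ideal $S_x=\{p\in P: x\notin h(p)\}$ from each point of a representation, and conversely represent $P$ canonically over its set of $(C,\omega)$-ideals. Your direction (2)$\Rightarrow$(1) is complete and correct as described, including the treatment of top and bottom and the role of the primeness clause.

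The one step that does not follow from the definitions as literally stated in this paper is your claim in (1)$\Rightarrow$(2) that $h$, being an injective map preserving finite meets, is order reflecting. Definition \ref{D:mapdefs} only requires an $(\omega,C)$-embedding to be injective, and injectivity together with preservation of \emph{existing} finite meets does not imply order reflection: on the two-element antichain $\{p,q\}$ the map $h(p)=\{1\}$, $h(q)=\{1,2\}$ into $\wp(\{1,2\})$ is injective and vacuously preserves all existing meets and joins, yet $h(p)\subseteq h(q)$ while $p\not\leq q$. Your argument needs $h(p)\setminus h(q)\neq\emptyset$ whenever $p\not\leq q$, i.e.\ genuine order reflection, and this is exactly what can fail for pairs admitting neither a meet nor a join (if $p\wedge q$ or $p\vee q$ exists, injectivity does rule out $h(p)\subseteq h(q)$). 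The intended notion of representation, as in \cite{Egr16}, does require $a\leq b\iff h(a)\subseteq h(b)$, and with that reading your proof goes through; but against the definitions as written here you should either state explicitly that order reflection is part of what ``embedding'' means, or supply a separate argument for it. Everything else is routine unwinding, as you say.
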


Note that in the above theorem $(\omega,C)$-representable corresponds to separation by $(C,\omega)$-ideals (reversing the order of $C$ and $\omega$). This notational irregularity is an unfortunate artifact of the duality between filters and ideals in the system used in \cite{Egr16}, and the convenience of using ideals over filters in our construction here. 
\end{section}

\begin{section}{Building $P$}\label{S:build}
We intend to use the fact that an elementary class must be closed under ultraroots. We will construct a countable poset $P$ that is not $(\omega,C)$-representable but that has an ultrapower that is $(\omega,C)$-representable. This section is devoted to the construction of $P$. We prove the relevant claims in the following sections.

\begin{lemma}\label{L:X}
Let $\omega+1=\{0,1,2,\dots,\omega\}$. Then for each $n\in\omega+1$ we can define a countable set $X_n\subset(0,1]\subset \mathbb{R}$ such that the following properties hold:
\begin{enumerate}
\item[P1:] $X_n$ is a dense subset of $(0,1]$ for all $n\leq \omega$.
\item[P2:] $X_m\cap X_n=\{1\}$ for all $m,n<\omega$.
\item[P3:] $X_n\cap X_\omega=\emptyset$ for all $n< \omega$.
\end{enumerate}
\begin{proof}
Let $\{p_0,p_1,p_2,\ldots\}$ be an enumeration of the primes. We can, for example, take $X_\omega=\mathbb{Q}\cap(0,1)$, and for each $n\in\omega$ define
\begin{equation*}
X_n=\{r\in(0,1):r=q.\sqrt{p_n} \text{ for some }q\in \mathbb{Q}\}\cup\{1\}
\end{equation*} 
\end{proof}
\end{lemma}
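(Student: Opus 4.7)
The plan is to take $X_\omega=\mathbb{Q}\cap(0,1)$ and, for each $n<\omega$, to choose a positive irrational anchor $\alpha_n$ and let $X_n$ consist of $\{1\}$ together with the positive rational multiples of $\alpha_n$ that lie in $(0,1)$. Each such $X_n$ is manifestly countable and contained in $(0,1]$, so the work lies entirely in verifying P1--P3 for a sufficiently clever choice of the anchors $\alpha_n$.

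Property P1 follows from standard density facts. The set $\mathbb{Q}\cap(0,1)$ is dense in $(0,1]$, handling $X_\omega$. For any positive real $\alpha$, the set $\{q\alpha:q\in\mathbb{Q},\,q>0\}$ is dense in $(0,\infty)$, since it is just a rescaling of the positive rationals, so its intersection with $(0,1)$ is dense in $(0,1)$; adjoining $1$ then gives density in $(0,1]$. Property P3 reduces immediately to observing that every nonzero rational multiple of an irrational is itself irrational, so $X_n\setminus\{1\}\subseteq\mathbb{R}\setminus\mathbb{Q}$, and since $1\notin X_\omega$ we obtain $X_n\cap X_\omega=\emptyset$ for each $n<\omega$.

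The only delicate condition is P2, which demands that distinct $X_m$ and $X_n$ (with $m,n<\omega$) meet only at $1$. The natural explicit choice is $\alpha_n=\sqrt{p_n}$ where $p_n$ is the $n$-th prime. If $q_1\sqrt{p_m}=q_2\sqrt{p_n}$ for positive rationals $q_1,q_2$ with $m\neq n$, then squaring and rearranging gives $q_1^2 p_m=q_2^2 p_n$, and unique factorization in $\mathbb{Z}$ forces $p_m=p_n$, a contradiction. So no irrational point can be shared across distinct $X_m,X_n$, and their only common element is $1$. There is no serious obstacle here; the entire lemma amounts to exhibiting anchors whose positive rational multiples are pairwise disjoint, and square roots of distinct primes are the standard device for that.
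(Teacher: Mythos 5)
Your construction is exactly the one the paper uses: $X_\omega=\mathbb{Q}\cap(0,1)$ and $X_n$ the positive rational multiples of $\sqrt{p_n}$ in $(0,1)$ together with $1$, with P2 settled by unique factorization and P3 by irrationality of nonzero rational multiples of irrationals. The argument is correct and matches the paper's proof, which simply exhibits the same sets and leaves these verifications implicit.
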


To construct $P$ we choose any $X_0,X_1,\ldots,X_\omega$ satisfying the conditions of lemma \ref{L:X}. We proceed by making the following definitions:

\begin{itemize}
\item $(0,1]$ is taken as a subset of $\mathbb{R}$.
\item $\mathbb{Q}^+=[0,\infty)\cap\mathbb{Q}$, i.e. $\mathbb{Q}^+$ is the set of non-negative rationals.
\item Given $q\in \mathbb{Q}^+$ we write $\lfloor q \rfloor$ for the largest integer smaller than (or equal to) $q$.
\item $\pi_x\colon (0,1]\times \mathbb{Q}^+\to (0,1]$ is defined by $\pi_x(x,y)=x$.  
\item $\pi_y\colon (0,1]\times \mathbb{Q}^+\to \mathbb{Q}^+$ is defined by $\pi_y(x,y)=y$. I.e. $\pi_x$ and $\pi_y$ are projection functions.
\item $P'$ is the subset of $(0,1]\times \mathbb{Q}^+$ composed of all pairs $(x,y)$ such that $x\in X_{\lfloor y \rfloor}$. We define an ordering on $P'$ by 
\begin{equation*}
a\leq b \iff \pi_x(a)\leq \pi_x(b) \text{ and } \pi_y(a)\leq \pi_y(b)
\end{equation*}
I.e. $P'$ inherits the product ordering on $(0,1]\times \mathbb{Q}^+$. 
\item $P$ is $P'$ capped by a top element $\top$.
\item Abusing notation a little we define $\pi_x\colon P\to (0,1]$ by setting 
\begin{equation*}
\pi_x(a) = \begin{cases} \pi_x(a) \text{ when } a\in (0,1]\times \mathbb{Q}^+ \\ \pi_x(\top)=1\end{cases} 
\end{equation*} 
\item Similarly we define $\pi_y\colon P\to \mathbb{Q}\cup\{\omega\}$ by setting 
\begin{equation*}
\pi_y(a) = \begin{cases} \pi_y(a) \text{ when } a\in (0,1]\times \mathbb{Q}^+ \\ \pi_y(\top)=\omega\end{cases} 
\end{equation*} 
\item $\B\colon P\to\omega+1$ is defined by \begin{equation*}\B(p)=\begin{cases}\lfloor\pi_y(p) \rfloor \text{ when $p\in P'$} \\ \omega \text{ when $p=\top$}\end{cases}\end{equation*}
\end{itemize}
Unpacking the definition of $P$ we see that it is defined by taking $\bigcup_{n\in\omega} \big(X_n\times ([n,n+1)\cap \mathbb{Q})\big)$, viewing it as a subposet of $(0,1]\times \mathbb{Q}^+$ ordered using the product ordering, then affixing a top element. Figure \ref{F:P} has a diagram. Here $p$ is a point of $P$, the shaded area to the upper right of $p$ marks the set of elements (other than $\top$) greater than $p$ in $P$, while the shaded area to the bottom left of $p$ marks those elements of $P$ that are less than $p$ in $P$. We think of $P$ is being composed of `boxes' stacked on top of each other (with a top added). The $\B$ function returns the number of the box containing the given element. In this case $\B(p)= 1$. 

\begin{figure}
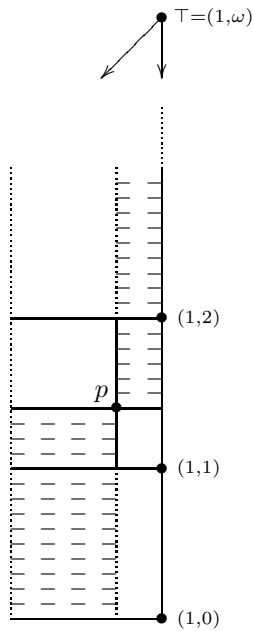
\[
\xy<1cm,0cm>:
(0,8);(0,6)**@{.},
(2,8);(2,6)*={\bullet}**@{-},
(0,6);(2,6)**@{-},
(2.5,6)*{_{(1,2)}},
(0,6);(0,4)**@{.},
(2,6);(2,4)*={\bullet}**@{-},
(0,4);(2,4)**@{-},
(2.5,4)*{_{(1,1)}},
(2.5,2)*{_{(1,0)}},
(0,4);(0,2)**@{.},
(0,2);(2,2)**@{-},
(2,4);(2,2)*={\bullet}**@{-},
(1.2,5)*={p},
(1.4,4.8)*={\bullet},
(2,8);(2,8.8)**@{.},
(2.7,10)*{_{\top= (1,\omega)}},
(2,10)*={\bullet};(2,9.2)**@{-}?>* \dir{>},
(2,10);(1.2,9.2)**@{-}?>* \dir{>},
(1.4,4);(1.4,6)**@{-},
(1.4,5);(2,5)**@{--},
(1.4,5.2);(2,5.2)**@{--},
(1.4,5.4);(2,5.4)**@{--},
(1.4,5.6);(2,5.6)**@{--},
(1.4,5.8);(2,5.8)**@{--},
%(1.4,6);(2,6)**@{--},
(1.4,6.2);(2,6.2)**@{--},
(1.4,6.4);(2,6.4)**@{--},
(1.4,6.6);(2,6.6)**@{--},
(1.4,6.8);(2,6.8)**@{--},
(1.4,7);(2,7)**@{--},
(1.4,7.2);(2,7.2)**@{--},
(1.4,7.4);(2,7.4)**@{--},
(1.4,7.6);(2,7.6)**@{--},
(1.4,7.8);(2,7.8)**@{--},
(0,4.8);(2,4.8)**@{-},
(0,4.6);(1.39,4.6)**@{--},
(0,4.4);(1.39,4.4)**@{--},
(0,4.2);(1.39,4.2)**@{--},
(0,3.8);(1.39,3.8)**@{--},
(0,3.6);(1.39,3.6)**@{--},
(0,3.4);(1.39,3.4)**@{--},
(0,3.2);(1.39,3.2)**@{--},
(0,3);(1.39,3)**@{--},
(0,2.8);(1.39,2.8)**@{--},
(0,2.6);(1.39,2.6)**@{--},
(0,2.4);(1.39,2.4)**@{--},
(0,2.2);(1.39,2.2)**@{--},
(1.4,6);(1.4,8)**@{.},
(1.4,4);(1.4,2)**@{.},
\endxy\]
\caption{\label{F:P}The poset $P$}
\end{figure}
\end{section}

\begin{section}{Some properties of $P$}

\begin{lemma}\label{L:xdiff}
If $p,q\in P$ with $\B(p)\neq\B(q)$. Then $\pi_x(p)=\pi_x(q)\iff \pi_x(p)=\pi_x(q)=1$.
\end{lemma}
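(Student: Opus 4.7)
The plan is to unpack the definitions and then invoke property P2 of lemma \ref{L:X}. The reverse direction is essentially trivial, so the work is in the forward direction.

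First I would observe that $\B$ takes the value $\omega$ only at $\top$. This is because for every $p\in P'$ we have $\pi_y(p)\in\mathbb{Q}^+$, so $\B(p)=\lfloor\pi_y(p)\rfloor\in\omega$. Since $\B(p)\neq\B(q)$, at most one of $p,q$ can equal $\top$.

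Next I would split into two cases. In the case where neither $p$ nor $q$ is $\top$, both lie in $P'$, so by the definition of $P'$ we have $\pi_x(p)\in X_{\lfloor\pi_y(p)\rfloor}=X_{\B(p)}$ and $\pi_x(q)\in X_{\B(q)}$, with $\B(p),\B(q)\in\omega$ distinct. If $\pi_x(p)=\pi_x(q)$ then this common value lies in $X_{\B(p)}\cap X_{\B(q)}$, which equals $\{1\}$ by P2, so $\pi_x(p)=\pi_x(q)=1$. In the remaining case, exactly one of $p,q$ equals $\top$; without loss of generality take $q=\top$, so $\pi_x(q)=1$ by definition. Then $\pi_x(p)=\pi_x(q)$ immediately forces $\pi_x(p)=1$ as well.

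The converse direction is immediate: if $\pi_x(p)=\pi_x(q)=1$, then certainly $\pi_x(p)=\pi_x(q)$.

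There is no real obstacle here; the statement is essentially a restatement of property P2 together with the definition of $\pi_x$ on $\top$, and the only thing to be careful about is handling the case involving $\top$ separately since $\top$ does not inherit its $x$-coordinate from membership in any $X_n$.
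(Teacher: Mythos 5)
Your proof is correct and matches the paper's approach, which simply observes that the statement is a direct consequence of property P2 of Lemma \ref{L:X}; your extra care with the case $q=\top$ (where $\pi_x(\top)=1$ by definition) is a reasonable elaboration of the same argument.
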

\begin{proof}
This is a trivial consequence of lemma \ref{L:X} (P2).
\end{proof}

\begin{lemma}\label{L:inBetween}
If $p<q\in P$ with $\B(p)< \B(q)$ we can find $r\in P$ with $\pi_y(p)=\pi_y(r)$ and $\pi_x(p)<\pi_x(r)<\pi_x(q)$.
\end{lemma}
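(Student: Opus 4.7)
My plan is to locate $r$ in the same ``box'' as $p$ by taking $\pi_y(r) = \pi_y(p)$, and then to choose the $x$-coordinate of $r$ using the density of $X_{\B(p)}$ in $(0,1]$ guaranteed by property P1 of Lemma \ref{L:X}. This reduces the problem to a one-shot density argument.

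The first step is to observe that $p < q$ (in the product ordering on $P'$, extended by a top element if $q = \top$) forces $\pi_x(p) \leq \pi_x(q)$. Under the generic assumption $\pi_x(p) < \pi_x(q)$ — the alternative, by Lemma \ref{L:xdiff}, being the boundary situation $\pi_x(p) = \pi_x(q) = 1$, in which the strict middle inequality cannot be satisfied at all and the claim must be tacitly restricted to the non-degenerate case — density of $X_{\B(p)}$ in $(0,1]$ supplies some $x \in X_{\B(p)}$ with $\pi_x(p) < x < \pi_x(q)$.

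The second step is to set $r := (x, \pi_y(p))$ and verify $r \in P'$. Since $\pi_y(r) = \pi_y(p)$ we have $\lfloor \pi_y(r) \rfloor = \B(p)$, and by construction $x \in X_{\B(p)} = X_{\lfloor \pi_y(r) \rfloor}$, so $r \in P' \subseteq P$. The required conditions $\pi_y(r) = \pi_y(p)$ and $\pi_x(p) < \pi_x(r) < \pi_x(q)$ are then immediate. I do not anticipate any substantive obstacle; the only real content of the lemma is that the relevant dense set $X_{\B(p)}$ matches the $y$-coordinate we need for $r$ to lie in $P'$, and this matching is exactly what the definition of $P'$ in terms of $X_{\lfloor y\rfloor}$ was designed to deliver.
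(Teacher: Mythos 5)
Your proof is correct and takes the same route as the paper, which simply cites the density property P1 of Lemma \ref{L:X}; your extra step of checking that $r=(x,\pi_y(p))$ actually lies in $P'$ because $x$ was drawn from $X_{\B(p)}$ is precisely the content being waved at. Your caveat about the degenerate case $\pi_x(p)=\pi_x(q)=1$ (e.g.\ $p=(1,0)$ and $q=\top$) is also well taken: the statement is literally false there and must be read with the implicit hypothesis $\pi_x(p)<\pi_x(q)$, which is exactly what is verified (``we cannot have $\pi_x(t')=1$'') before the lemma is invoked in its one application, inside the proof of Lemma \ref{L:onlyMeet}.
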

\begin{proof}This is a trivial consequence of lemma \ref{L:X} (P1). 
\end{proof}

\begin{lemma}\label{L:meet}
Let $p$ and $q\in P$ with $\B(p)=\B(q)=n$. Then \begin{equation*}p\wedge q=\big(\min(\pi_x(p),\pi_x(q)),\min(\pi_y(p),\pi_y(q))\big)\end{equation*}
\end{lemma}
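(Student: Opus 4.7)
The plan is to show directly that the candidate $m := (\min(\pi_x(p),\pi_x(q)),\min(\pi_y(p),\pi_y(q)))$ is a well-defined element of $P$ and then verify it is the greatest lower bound of $\{p,q\}$ in $P$. The only nontrivial point is well-definedness, since the ordering on $P'$ is just the product order restricted to a subset of $(0,1]\times\mathbb{Q}^+$, so once $m\in P'$ is established, the meet property is immediate from the product-order definition.

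First I would observe that, because $\B(p)=\B(q)=n<\omega$, neither $p$ nor $q$ equals $\top$, so both lie in $P'$. In particular $\pi_x(p),\pi_x(q)\in X_n$ and $\pi_y(p),\pi_y(q)\in[n,n+1)\cap\mathbb{Q}$. The coordinate $\pi_x(m)$ is whichever of $\pi_x(p),\pi_x(q)$ is smaller, so it lies in $X_n$; similarly $\pi_y(m)\in[n,n+1)\cap\mathbb{Q}$, so $\lfloor\pi_y(m)\rfloor=n$. Hence $\pi_x(m)\in X_{\lfloor\pi_y(m)\rfloor}$, which is exactly the membership condition for $P'$, so $m\in P'\subseteq P$.

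Now the verification of the universal property is routine: by construction $m\leq p$ and $m\leq q$ under the product order, so $m$ is a lower bound. For any lower bound $r\in P$ of $\{p,q\}$, we cannot have $r=\top$ (since $\top$ dominates everything in $P$ and $p\neq\top$), so $r\in P'$ and the order is again coordinatewise. Then $\pi_x(r)\leq\min(\pi_x(p),\pi_x(q))=\pi_x(m)$ and $\pi_y(r)\leq\min(\pi_y(p),\pi_y(q))=\pi_y(m)$, giving $r\leq m$ as required.

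The mild obstacle is just confirming $m\in P'$, which is where the hypothesis $\B(p)=\B(q)$ is used essentially: without it, $\pi_x(m)$ might equal $\pi_x(p)\in X_{\B(p)}$ while $\lfloor\pi_y(m)\rfloor=\B(q)\neq\B(p)$, and then by lemma \ref{L:X} (P2) the required membership $\pi_x(m)\in X_{\lfloor\pi_y(m)\rfloor}$ would typically fail. Everything else is forced by the product-order definition of $\leq$ on $P'$.
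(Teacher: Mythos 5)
Your proof is correct and takes the same route the paper does: the paper simply asserts the result ``follows immediately from the definition of the order on $P$,'' and your argument is exactly that verification written out in full, with the one genuinely non-automatic point (that the candidate meet lies in $P'$, which uses $\B(p)=\B(q)$) correctly identified and handled.
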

\begin{proof}
This follows immediately from the definition of the order on $P$.
\end{proof}

\begin{lemma}\label{L:onlyMeet}
Let $S\subset P$ be finite. Then $\bw S$ is defined if and only if there is $n\in\omega$ such that, for all $s\in S$, either $\B(s)=n$ or $s\geq s'$ for some $s'\in S$ with $\B(s')=n$.
\end{lemma}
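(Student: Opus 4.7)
The plan is to treat each direction separately, using lemma~\ref{L:meet} for one and two coordinate-perturbation arguments for the other. Since $\top$ is above everything, $\bw S=\bw(S\setminus\{\top\})$ whenever $\top\in S$, and the stated condition is also insensitive to the presence of $\top$ (as $\top\geq s'$ for any $s'$). So I may assume $\top\notin S$.

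For the ``if'' direction, let $T=\{s\in S:\B(s)=n\}$; iterating lemma~\ref{L:meet} gives $m_T:=\bw T$ in box $n$ with coordinates the componentwise minima of $T$. The hypothesis says every $s\in S\setminus T$ dominates some element of $T$ and hence dominates $m_T$, so $m_T$ is a lower bound of $S$; any lower bound of $S$ is in particular a lower bound of $T$ and so sits below $m_T$. Thus $m_T=\bw S$.

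For the converse, suppose $m:=\bw S$ exists and set $n=\B(m)$. First, $T:=\{s\in S:\B(s)=n\}$ must be non-empty: otherwise every $s\in S$ satisfies $\pi_y(s)\geq n+1$, and then for any rational $\epsilon$ with $0<\epsilon<n+1-\pi_y(m)$ the point $\ell:=(\pi_x(m),\pi_y(m)+\epsilon)$ lies in $P'$ in box $n$, strictly exceeds $m$, and sits below every $s\in S$ (the $y$-inequality from $\pi_y(\ell)<n+1\leq\pi_y(s)$, the $x$-inequality inherited from $m\leq s$), contradicting maximality of $m$. Second, every $s\in S\setminus T$ must dominate some element of $T$: otherwise some $s^{**}\in S\setminus T$ has $\pi_x(s')>\pi_x(s^{**})$ for all $s'\in T$ (the $y$-comparison being automatic as $\pi_y(s')<n+1\leq\pi_y(s^{**})$). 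Setting $u:=\min_{s\in S\setminus T}\pi_x(s)$, lemma~\ref{L:xdiff} rules out $\pi_x(m)=u$ (equality would force $\pi_x(m)=u=1$, hence $\pi_x(s)=1$ for all $s\in S$, contradicting $\pi_x(s')>\pi_x(s^{**})$), so $\pi_x(m)<u$; density of $X_n$ in $(0,1]$ (lemma~\ref{L:X}, P1) then furnishes $x\in X_n$ with $\pi_x(m)<x<u$, and $\ell:=(x,\pi_y(m))$ lies in $P'$ in box $n$, strictly exceeds $m$, and is below every $s\in S$ (for $s\in T$ we have $x<u<\pi_x(s')$; for $s\in S\setminus T$ we have $x<u\leq\pi_x(s)$), again contradicting maximality.

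The delicate point is verifying that each perturbed point $\ell$ really lies in $P'$: this pins down $\B(\ell)=n$ and forces $\pi_x(\ell)$ into the correct $X_n$. The strict inequality $\pi_x(m)<u$ extracted via lemma~\ref{L:xdiff} is exactly what makes density (P1) available to produce the replacement $x$-coordinate in the second perturbation argument.
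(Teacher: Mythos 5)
Your proof is correct and follows essentially the same route as the paper's: the small-rational $y$-perturbation to show the meet's box is witnessed in $S$ (the paper phrases this as $\pi_y(\bw S)=\min_{s\in S}\pi_y(s)$), the observation that a non-dominating element has strictly smaller $x$-coordinate than every box-$n$ element, lemma \ref{L:xdiff} to upgrade $\leq$ to $<$, and density of $X_n$ to manufacture a lower bound strictly above the purported meet. The ``if'' direction you spell out is exactly the ``easy generalization of lemma \ref{L:meet}'' that the paper invokes without detail.
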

\begin{proof}
The `if' part follows from an easy generalization of lemma \ref{L:meet}. For `only if' we proceed by proving a sequence of sub-claims culminating in a proof of the main statement. 
\begin{enumerate}
\item ``If $\bw S$ exists then we must have $\pi_x(\bw S)\leq \min(\{\pi_x(s):s\in S\})"$\\
This is automatic from the definition of the order on $P$.
\item``If $\bw S$ exists then we must have $\pi_y(\bw S)= \min(\{\pi_y(s):s\in S\})$"\\
As above we must have $\pi_y(\bw S)\leq \min(\{\pi_y(s):s\in S\})$. If $\pi
_y(\bw S)<\min(\{\pi_y(s):s\in S\})$ then we obtain a contradiction by finding a new candidate for the infimum $(\pi_x(\bw S),\pi_y(\bw S)+\epsilon)$ where $\epsilon$ is some suitably small rational value of our choice.
\item ``If $s\in S$ with $\B(s)\neq\B(\bw S)=n$ and $s\not\geq s'$ for all $s'\in S$ with $\B(s')=n$ then $\pi_x(s)<\pi_x(s')$ for all $s'\in S$ with $\B(s')=n$"\\
We must have $\B(s)>n$ so $\pi_y(s)>\pi_y(s')$ for all $s'\in S$ with $\B(s')=n$. Since for all such $s'$ we have $s\not\geq s'$ we must have $\pi_x(s)<\pi_x(s')$.
\item ``The lemma is true" \\
We prove this by contradiction. Suppose $\bw S$ is defined but there is non-empty $T\subset S$ with $\B(t)>\B(\bw S)=n$ and $t\not\geq s'$ for all $t\in T$ and for all $s'\in S$ with $\B(s')=n$. Since $S$ is finite we can choose $t'\in T$ so that $\pi_x(t')=\min(\{\pi_x(t):t\in T\})$. By (3) we have $\pi_x(t')\leq\pi_x(s)$ for all $s\in S$, and we must have $\pi_x(\bw S)\leq \pi_x(t')$ by (1). Since $\B(t')>\B(\bw S)$ we must have $\pi_x(\bw S)< \pi_x(t')$ (by lemma \ref{L:xdiff}, as we cannot have $\pi_x(t')=1$). By lemma \ref{L:inBetween} and (3) we can find $r\in P$ with $\bw S< r < s$ for all $s\in S$, but this is a contradiction on the definition of $\bw S$.   
\end{enumerate}
\end{proof}

\begin{corollary}\label{C:meets}
Let $S\subset P$ be finite. The $\bw S$ is defined in $P$ if and only if there is $T\subseteq S$ and $n\in\omega$ with $\B(t)=n$ for all $t\in T$, and for each $s\in S$ there is $t\in T$ with $s\geq t$. In this case $\bw T = \bw S = (\min(\{\pi_x(s):s\in S\}),\min(\{\pi_y(s):s\in S\}))$.
\end{corollary}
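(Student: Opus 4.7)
The plan is to deduce this corollary directly from Lemma \ref{L:onlyMeet} by identifying the natural witnessing set $T$, and to read off the value of $\bw S$ from the obvious finite-set generalization of Lemma \ref{L:meet} applied to $T$.

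First I would set up the equivalence. For the forward direction, suppose $\bw S$ exists and let $n \in \omega$ be the integer supplied by Lemma \ref{L:onlyMeet}. Set $T = \{s \in S : \B(s) = n\}$. Then $\B(t) = n$ for every $t \in T$ by construction, and Lemma \ref{L:onlyMeet} tells me that each $s \in S$ either lies in $T$ (in which case take $t = s$) or dominates some element of $T$. For the reverse direction, suppose $T \subseteq S$ and $n \in \omega$ are as in the statement. Then for each $s \in S$ with $\B(s) \neq n$ there is some $t \in T \subseteq S$ with $\B(t) = n$ and $s \geq t$, so the hypothesis of Lemma \ref{L:onlyMeet} is satisfied and $\bw S$ is defined.

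For the formula, I would apply the finite-set extension of Lemma \ref{L:meet} to $T$, whose elements all share box $n$, to obtain $\bw T = (\min\{\pi_x(t) : t \in T\},\, \min\{\pi_y(t) : t \in T\})$. To conclude $\bw T = \bw S$, I note that $T \subseteq S$ forces $\bw S \leq \bw T$, while the hypothesis that every $s \in S$ dominates some $t \in T$ ensures $\bw T$ is a lower bound for $S$, hence $\bw T \leq \bw S$. The same two-way comparison of minima over $T$ and over $S$ (using that each $s \in S$ dominates some $t \in T$ to bound $\min_T$ above by any $\pi_x(s)$, and using $T \subseteq S$ for the other direction) gives $\min\{\pi_x(t) : t \in T\} = \min\{\pi_x(s) : s \in S\}$ and likewise for $\pi_y$, yielding the stated formula.

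I don't anticipate a real obstacle: the corollary amounts to bookkeeping on top of Lemma \ref{L:onlyMeet} and Lemma \ref{L:meet}, with the only slightly substantive step being the identification of $T$ as the set of $S$-elements at the minimal box level.
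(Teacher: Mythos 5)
Your proposal is correct and follows essentially the same route as the paper, which simply notes that the equivalence is immediate from Lemma \ref{L:onlyMeet} and that the formula follows from the easy finite-set generalization of Lemma \ref{L:meet}; you have merely made explicit the natural choice $T=\{s\in S:\B(s)=n\}$ and the two-way comparison of minima that the paper leaves to the reader.
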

\begin{proof}
The if and only if statement follows directly from lemma \ref{L:onlyMeet}, and the rest follows from an easy generalization of lemma \ref{L:meet}.
\end{proof}

Analogous versions of lemmas \ref{L:meet} and \ref{L:onlyMeet}, and corollary \ref{C:meets} hold for joins. 

\begin{lemma}\label{L:Join}
Let $q\in P$ and let $\pi_y(q)=n+1$. Then $q=\bv\{p\in P : \B(p)=n$ and $\pi_x(p)<\pi_x(q)\}$.
\end{lemma}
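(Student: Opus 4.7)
Let $S$ denote the candidate set $\{p\in P : \B(p)=n \text{ and } \pi_x(p)<\pi_x(q)\}$. Note first that $q\neq \top$ since $\pi_y(q)=n+1\in\mathbb{Q}$, so $q\in P'$ with $\B(q)=n+1$ and $\pi_x(q)\in X_{n+1}\subseteq (0,1]$.

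The easy direction is verifying that every $p\in S$ satisfies $p\leq q$, which is immediate from the product order on $P'$: if $\B(p)=n$ then $\pi_y(p)\in[n,n+1)\cap\mathbb{Q}$, hence $\pi_y(p)<n+1=\pi_y(q)$, and we already have $\pi_x(p)<\pi_x(q)$ by membership in $S$.

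The main content is showing $q$ is the least upper bound. I would take an arbitrary upper bound $r\in P$ of $S$ and argue $r\geq q$. If $r=\top$ the conclusion is immediate, so assume $r\in P'$; it then suffices to establish $\pi_x(r)\geq\pi_x(q)$ and $\pi_y(r)\geq n+1$. For the $x$-coordinate I argue by contradiction: if $\pi_x(r)<\pi_x(q)$, then density of $X_n$ in $(0,1]$ (property P1 of lemma \ref{L:X}) yields some $x\in X_n$ with $\pi_x(r)<x<\pi_x(q)$, and then $(x,n)\in P'$ (since $\lfloor n\rfloor=n$ and $x\in X_n$) lies in $S$ yet exceeds $r$ in the $x$-coordinate, contradicting $r\geq(x,n)$. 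For the $y$-coordinate, if $\pi_y(r)<n+1$ I produce a rational $s\in[n,n+1)$ strictly greater than $\pi_y(r)$: take $s=n$ when $\pi_y(r)<n$, and otherwise invoke density of $\mathbb{Q}$ in $(\pi_y(r),n+1)$. Choosing any $x\in X_n$ with $x<\pi_x(q)$ (again by P1), the pair $(x,s)$ belongs to $P'$ with $\B((x,s))=n$ and $\pi_x((x,s))<\pi_x(q)$, so it lies in $S$ while $\pi_y((x,s))=s>\pi_y(r)$, contradicting $r\geq(x,s)$.

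The proof is essentially a direct unpacking of the product order together with two density facts: the density of $X_n$ in $(0,1]$ and the density of $\mathbb{Q}$ in $\mathbb{R}$. I do not anticipate a real obstacle; the only mild subtlety is the case split on the $y$-coordinate argument (depending on whether $\pi_y(r)$ lies below or within the rational interval $[n,n+1)$) to ensure the witness $s$ can be chosen in $[n,n+1)\cap\mathbb{Q}$ so that the constructed element actually sits in $P'$.
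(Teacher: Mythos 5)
Your proof is correct and takes essentially the same approach as the paper's: the paper likewise argues that any upper bound $r$ of the set must satisfy $\pi_x(r)\geq\pi_x(q)$ by density of $X_n$ (property P1 of lemma \ref{L:X}) and $\pi_y(r)\geq\pi_y(q)$ by density of $\mathbb{Q}$, whence $r\geq q$. You simply spell out the details (the explicit check that $q$ is an upper bound, the contradiction witnesses, and the case split ensuring the $y$-coordinate witness lands in $[n,n+1)\cap\mathbb{Q}$) that the paper leaves implicit.
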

\begin{proof}
Let $S=\{p\in P : \B(p)=n$ and $\pi_x(p)<\pi_x(q)\}$. If $r\in P$ and $r\geq p$ for all $p\in S$ then $\pi_y(r)\geq \pi_y(q)$ by density of $\mathbb{Q}$, and $\pi_x(r)\geq \pi_x(q)$ by lemma \ref{L:X} (P1). Since $q=(\pi_x(q),\pi_y(q))$ we are done.
\end{proof}

\begin{definition}[meet-prime]
If $Q$ is any poset then $q\in Q$ is \emph{meet-prime} if, for all finite $S\subseteq Q$ with $\bw S$ defined in $P$, whenever $\bw S\leq q$ there is $s\in S$ with $s\leq q$.
\end{definition}

\begin{lemma}\label{L:prime}
Let $p=(1,y)\in P'$. Then $p$ is meet-prime in $P$.
\end{lemma}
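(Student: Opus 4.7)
The plan is to apply Corollary~\ref{C:meets} to expand $\bigwedge S$ coordinate-wise and then exploit the fact that $\pi_x(p)=1$ is maximal in $(0,1]$, so the $x$-coordinate comparison in $s\leq p$ is automatic for every $s\in P'$.

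First I would dispose of the trivial interaction with $\top$. Suppose $S$ is a finite subset of $P$ with $\bigwedge S$ defined and $\bigwedge S\leq p$. If $S=\{\top\}$ then $\bigwedge S=\top\not\leq p$, contradicting the hypothesis. Otherwise, since $\top$ dominates every element of $P$, removing $\top$ from $S$ does not change $\bigwedge S$, and we may assume $S\subseteq P'$.

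Next, Corollary~\ref{C:meets} gives $\bigwedge S=(\min\{\pi_x(s):s\in S\},\min\{\pi_y(s):s\in S\})$. Writing $p=(1,y)$, the inequality $\bigwedge S\leq p$ yields $\min\{\pi_y(s):s\in S\}\leq y$. I would then pick $s^{*}\in S$ attaining this minimum; by construction $\pi_y(s^{*})\leq y=\pi_y(p)$, while $\pi_x(s^{*})\leq 1=\pi_x(p)$ holds automatically because every element of $P'$ has $x$-coordinate in $(0,1]$. Hence $s^{*}\leq p$, as required.

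The main step is simply the observation that the minimum of $\pi_y$ over a finite set is attained; there is no real obstacle here, and the conclusion is essentially immediate once Corollary~\ref{C:meets} is in hand. The crucial feature of $p$ is that $\pi_x(p)=1$ renders the $x$-coordinate constraint vacuous, which is precisely why meet-primality holds on this right-hand edge of $P'$ and (as one can see from the picture) fails generically elsewhere.
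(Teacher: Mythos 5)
Your proof is correct and follows essentially the same route as the paper: reduce to the coordinate-wise formula for $\bigwedge S$ and observe that $\pi_x(p)=1$ makes the $x$-coordinate comparison automatic. The only differences are cosmetic — you invoke Corollary~\ref{C:meets} rather than Lemma~\ref{L:meet} and dispose of $\top$ explicitly, both of which are harmless.
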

\begin{proof}
Let $S\subset P$ be finite. By lemma \ref{L:meet} we know that $\bw S = (\min(\{\pi_x(s):s\in S\}),\min(\{\pi_y(s):s\in S\})$ whenever it is defined, so if $\bw S \leq p$ then there is $s\in S$ with $\pi_y(s)\leq\pi_y(p)$. Since $\pi_x(p)=1$ we must also have $\pi_x(s)\leq\pi_x(p)$ and thus $s\leq p$ as required.
\end{proof}

\begin{lemma}\label{L:verticalIdeal}
Let $r\in X_\omega$. Then $\gamma=\{p\in P: \pi_x(p)<r\}$ is an $\omega$-ideal of $P$.
\end{lemma}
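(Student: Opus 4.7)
The plan is to verify the three defining properties of an $\omega$-ideal directly from the explicit descriptions of finite meets and joins established earlier. First, $\gamma$ is nonempty and proper: since $r>0$ and $X_0$ is dense in $(0,1]$ by P1, we can pick $x\in X_0$ with $x<r$, giving a point of $P'$ in $\gamma$; on the other hand, because $r\in X_\omega=\mathbb{Q}\cap(0,1)$ we have $r<1=\pi_x(\top)$, so $\top\notin \gamma$.

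Second, downward closure is immediate: if $p\in \gamma$ and $q\leq p$, then $\pi_x(q)\leq \pi_x(p)<r$, so $q\in \gamma$.

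Third, for closure under existing finite joins, take a finite $Y\subseteq \gamma$ with $\bv Y$ defined. By the join analogue of corollary \ref{C:meets} (explicitly flagged by the author just before this lemma), $\pi_x(\bv Y)=\max\{\pi_x(y):y\in Y\}$. Since $Y$ is finite and each $\pi_x(y)<r$, this maximum is still strictly less than $r$, so $\bv Y\in \gamma$.

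Finally, for the meet-primality condition, suppose $Y\subseteq P$ is finite, $\bw Y$ is defined, and $\bw Y\in \gamma$. By corollary \ref{C:meets} we have $\pi_x(\bw Y)=\min\{\pi_x(y):y\in Y\}$, so there exists $y\in Y$ with $\pi_x(y)=\pi_x(\bw Y)<r$, giving $y\in Y\cap \gamma$ as required. There is no real obstacle here: all four checks reduce to a single observation, namely that $\pi_x$ transports joins to maxima and meets to minima on finite subsets where these exist, and the condition $\pi_x(\cdot)<r$ is preserved under both operations. The only point that requires a moment's care is properness of $\gamma$, which relies on the fact that $X_\omega\subset(0,1)$ excludes $1$, so that $\top$ is safely outside.
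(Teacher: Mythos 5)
Your proof is correct and follows essentially the same route as the paper's: downward closure is immediate, and the join and meet conditions both reduce to the fact that, by corollary \ref{C:meets} and its join analogue, $\pi_x$ sends existing finite meets to minima and existing finite joins to maxima. The only difference is that you also explicitly check nonemptiness and properness of $\gamma$ (which the paper leaves implicit), a harmless and slightly more careful addition.
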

\begin{proof}
Downward closure is automatic. Let $S$ be a finite subset of $\gamma$ such that $\bv S$ is defined in $P$. Then by the `join' version of corollary \ref{C:meets} we have $\pi_x(\bv S) =\max(\{\pi_x(s):\in S\})$, and $\pi_x(s)<r$ for all $s\in S$, so $\pi_x(\bv S)< r$ and $\bv S \in \gamma$ as required. Finally, if $S$ is a finite subset of $P$ with $\bw S$ defined then by corollary \ref{C:meets} we must have $\pi_x(\bw S)=\min(\{\pi_x(s):s\in S\})$. So if $\pi_x(\bw S)< r$ then there is $s\in S$ with $\pi_x(s)< r$ too. 
\end{proof}

\end{section}

\begin{section}{$P$ is not $(\omega,C)$-representable}
\begin{lemma}\label{L:notRep}
Let $p,q\in P$ be such that $\pi_y(p)=\pi_y(q)$ and $\pi_x(p)<\pi_x(q)$. Then there is no $(C,\omega)$-ideal containing $p$ but not $q$.
\end{lemma}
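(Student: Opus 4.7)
The plan is to argue by contradiction. Suppose $\gamma$ is a $(C,\omega)$-ideal with $p\in\gamma$ and $q\notin\gamma$, and write $x_p=\pi_x(p)$, $x_q=\pi_x(q)$, $y_0=\pi_y(p)=\pi_y(q)$, and $n=\B(p)=\B(q)$. Since $1\in X_m$ for every finite $m$ by property P2 of Lemma \ref{L:X}, each element $(1,m+1)$ with $m\geq n$, as well as $(1,y_0)$ itself, lies in $P$ and dominates $q$ componentwise. By downward closure of $\gamma$, none of these ``rightmost'' pivots lies in $\gamma$.

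The central technical step will be a claim that every $r\in P$ with $\pi_x(r)\leq x_p$ is already in $\gamma$. For $r$ lying in a box $m<n$ this is immediate from $r\leq p$. For $r$ in box $m\geq n$ I would proceed by induction on $m$. The base case $m=n$ splits on whether $\pi_y(r)\leq y_0$ (giving $r\leq p$) or $\pi_y(r)>y_0$; in the second case Lemma \ref{L:meet} gives $(1,y_0)\wedge r=(\pi_x(r),y_0)\leq p$, so the meet lies in $\gamma$, and meet-primeness combined with $(1,y_0)\notin\gamma$ forces $r\in\gamma$. For the inductive step I would first handle elements on the bottom slice $\pi_y(r)=m+1$ of box $m+1$: Lemma \ref{L:Join} writes any such $r$ as the join of all box-$m$ elements with $x$-coordinate strictly below $\pi_x(r)\leq x_p$, all of which lie in $\gamma$ by the inductive hypothesis, so $r\in\gamma$ by closure of $\gamma$ under arbitrary existing joins. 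For the remaining $r$ in box $m+1$ (with $\pi_y(r)>m+1$), take the meet with the pivot $(1,m+1)$: by Lemma \ref{L:meet} this meet equals $(\pi_x(r),m+1)$, which is in $\gamma$ by the bottom-slice case just settled, and meet-primeness finishes the argument.

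The contradiction then follows by collecting everything. The set $S=\{r\in P:\pi_x(r)\leq x_p\}$ is contained in $\gamma$, and because each $X_m$ is dense in $(0,1]$ it meets every box; consequently its members have $\pi_y$-coordinates unbounded in $\mathbb{Q}^+$. Any upper bound of $S$ in $P$ must therefore have $\pi_y=\omega$, forcing $\bigvee S=\top$. Closure of $\gamma$ under existing joins then gives $\top\in\gamma$, and since $q\leq\top$, downward closure yields $q\in\gamma$, contradicting our assumption.

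The main obstacle will be getting the induction to climb through the boxes while preserving the constraint $\pi_x\leq x_p$: Lemma \ref{L:Join} is exactly the tool that transports box-$m$ information up to the bottom slice of box $m+1$, and the elements $(1,m+1)$ are precisely the pivots needed to spread that slice information across the rest of the box by a meet-prime argument. The final density/cofinality observation is what turns a purely ``left of $x_p$'' conclusion into the global collapse of $\gamma$.
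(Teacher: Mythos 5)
Your proof is correct and follows essentially the same route as the paper's: both climb from box to box using the join decomposition of Lemma \ref{L:Join} together with binary meets and the finite-meet condition on $(C,\omega)$-ideals, and both conclude by observing that the elements accumulated in every box have supremum $\top$, forcing $\top\in\gamma$ and hence $q\in\gamma$. The only cosmetic difference is that you take meets with the fixed pivots $(1,y_0)$ and $(1,m+1)$ (which cannot lie in $\gamma$ because they dominate $q$), whereas the paper meets with $q$ itself (and later with elements $q^*$ such as $(1,k+1)$) and threads the resulting dichotomy through the induction.
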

\begin{proof}
Suppose $\B(p)=\B(q)=n$. Let $\gamma$ be a $(C,\omega)$-ideal containing $p$. We will show that $\gamma$ contains $q$. We proceed by proving sub-claims as follows:
\begin{enumerate}
\item ``Either $\gamma$ contains $q$ or $\gamma$ contains some $q'$ such that $\B(q')=n+1$"\\
Pick any $q'\in P$ so that $\pi_y(q')=n+1$ and $\pi_x(q')<\pi_x(p)$. Let $S=\{r\in P:\B(r)=n$, $\pi_y(r)\geq \pi_y(p)$ and $\pi_x(r)<\pi_x(q')\}$. Then $q'=\bv S$ by lemma \ref{L:Join}. Let $r\in S$. Then $r\wedge q\leq p$ by lemma \ref{L:meet}, so as $\gamma$ is a $(C,\omega)$-ideal we must have either $r\in\gamma$ or $q\in\gamma$. So if $q\notin \gamma$ then $S\subseteq \gamma$ and thus $q'\in\gamma$ as $q'=\bv S$.  So either $q\in\gamma$ or $q'\in\gamma$ as claimed.

\item ``For all $k\in\omega$, either $\gamma$ contains $q$ or $\gamma$ contains some $q_k$ such that $\B(q_k)=k$"\\
This follows fairly easily by induction using part (1). Having used the argument in the $k$th box to obtain $q_{k+1}\in\gamma$ with $\pi_y(q_{k+1})= k+1$, we repeat it using $q_{k+1}$ in place of $p$ and $q^*$ in place of $q$ for some $q^*$ with $q<q^*$ and $\pi_y(q^*)=k+1$ (we could use $q^*=(1,k+1)$ for example).

\item ``$\gamma$ contains $q$" \\
Since for all $k\in\omega$ there is $q_k\in\gamma$ such that $\B(q_k)=k$ we note that $\bv_{k\in\omega} q_k =\top$, and thus that $\top\in\gamma$. Since $q\leq\top$ we must conclude that $q\in\gamma$.
\end{enumerate} 
\end{proof}
\end{section}

\begin{section}{An ultrapower of $P$ that is $(\omega,C)$-representable}
Let $U\subset \wp(\omega)$ be a non-principal ultrafilter. Let $\UPP$ be the ultrapower of $P$ over $U$. We will show that $\UPP$ is $(\omega,C)$-representable by constructing separating $(C,\omega)$-ideals and appealing to theorem \ref{T:rep}. The main result will then be the following.

\begin{theorem}\label{T:main}
The class of $(\omega,C)$-representable posets is not elementary. 
\end{theorem}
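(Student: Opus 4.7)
The plan is to apply \L o\'s' theorem to conclude that $P$ and $\UPP$ are elementarily equivalent. Once we show that $P$ is not $(\omega,C)$-representable while $\UPP$ is, the class of $(\omega,C)$-representable posets fails to be closed under elementary equivalence and therefore cannot be elementary.

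The non-representability of $P$ is essentially already in hand. Pick any two elements $p,q\in P$ in the same box with $\pi_y(p)=\pi_y(q)$ and $\pi_x(p)<\pi_x(q)$, so that $p<q$ and in particular $q\not\leq p$. By Lemma \ref{L:notRep}, no $(C,\omega)$-ideal of $P$ contains $p$ without containing $q$, so the family of $(C,\omega)$-ideals is not down-separating, and Theorem \ref{T:rep} gives non-representability.

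The main work, and the expected main obstacle, is to construct a down-separating family of $(C,\omega)$-ideals of $\UPP$. The plan is to lift suitable $\omega$-ideals of $P$ (primarily the vertical ideals $\gamma_r=\{p\in P:\pi_x(p)<r\}$ for $r\in X_\omega$ from Lemma \ref{L:verticalIdeal}, together with horizontal analogues based on $\pi_y$) to subsets $\widehat\gamma=\{[f]_U\in\UPP:\{n:f(n)\in\gamma\}\in U\}$. Downward closure and the prime condition on finite meets should transfer from $P$ via \L o\'s' theorem in a routine way. The delicate property is closure under all existing joins, since the $\gamma_r$ are merely $\omega$-ideals of $P$ and not $(C,\omega)$-ideals: joins such as $\bv_{k\in\omega}(1,k)=\top$ take elements of $\gamma_r$ outside $\gamma_r$. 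The key observation we plan to exploit is that precisely these pathological countable joins of standard elements cease to exist in the ultrapower; for instance $[(1,n)]_U$ is a non-standard element strictly above every constant $(1,k)$ but strictly below $\top$. Using \L o\'s' theorem to characterize when a join exists in $\UPP$, we intend to show that any $\bv S$ that does exist with $S\subseteq\widehat\gamma$ must stay inside $\widehat\gamma$.

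For down-separation in $\UPP$, the plan is a case analysis on why $[f]_U\not\leq[g]_U$: one of the projections $\pi_x$ or $\pi_y$ must fail to be dominated on a set in $U$, and then properties P1 and P3 of Lemma \ref{L:X} together with the density of $X_\omega$ should allow us to select a suitable threshold whose lifted ideal separates. A likely subtlety is that the separating ideal may need to come from an ultraproduct of $\omega$-ideals indexed by $\omega$ rather than a single $P$-level ideal, so the $(C,\omega)$-verification may need to be carried out in that slightly more general setting. Once separation is established, Theorem \ref{T:rep} delivers the $(\omega,C)$-representability of $\UPP$, and the theorem follows.
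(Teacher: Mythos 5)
Your overall architecture matches the paper's: show $P$ is not $(\omega,C)$-representable via Lemma \ref{L:notRep} and Theorem \ref{T:rep}, show $\UPP$ is $(\omega,C)$-representable by exhibiting down-separating $(C,\omega)$-ideals, and conclude by closure of elementary classes under ultraroots (equivalently, elementary equivalence of $P$ and $\UPP$). The non-representability half and the reduction of separation to the two cases on $\pi_x$ and $\pi_y$ are fine; the $\pi_y$ case is handled in the paper by principal downsets of meet-prime elements $[(1,y_i)]$ via Lemma \ref{L:prime}, which is close to what you sketch.

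The genuine gap is in your central claim that for the lifted vertical ideal $\widehat\gamma=\{[a]:\{i:\pi_x(a(i))<r_i\}\in U\}$, ``any $\bv S$ that does exist with $S\subseteq\widehat\gamma$ must stay inside $\widehat\gamma$.'' This is false: $\bv\widehat\gamma=\top$ in $\UPP$, so $\widehat\gamma$ is not closed under existing joins. Indeed, if $[b]\neq[\top]$ then $g=\pi_y\circ b$ is an $\omega$-indexed family of rationals, and $\widehat\gamma$ contains an element $[a]$ with $a(i)=(x_i,g(i)+1)$ and $x_i<r_i$, which is not below $[b]$; hence $\top$ is the \emph{only} upper bound of $\widehat\gamma$. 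Your key observation---that pathological countable joins of standard elements cease to exist in the ultrapower---is correct but does not save you, because the lifted ideal contains nonstandard elements of unbounded box-height, and it is the join of \emph{those} that reaches $\top$. Your hedge about passing to an ultraproduct of ideals indexed by $\omega$ does not address this either. The missing idea is the truncation in Definition \ref{D:G}: one must intersect $\widehat\gamma$ with the set of $[a]$ satisfying $\{i:\B(a(i))\leq f(i)+z\}\in U$ for some fixed $z\in\mathbb{Z}$, where $f$ is a reference growth rate chosen from the element to be separated. Then $[b']$ with $b'(i)=(1,f(i)+i)$ is a non-top upper bound of the whole ideal, so $\top$ can never arise as an existing join of a subset of it (Case 1 of Proposition \ref{P:main}), and the remaining cases are handled by perturbing a putative non-top join downward. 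Verifying that this truncated set still satisfies the finite-meet condition of a $(C,\omega)$-ideal requires a further reduction (Lemma \ref{L:Delta}) showing that any existing finite meet in $\UPP$ equals the meet of a subfamily lying in a common box $U$-often; this step is also absent from your outline.
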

\begin{proof}
The poset $P$ constructed in section \ref{S:build} is not $(\omega,C)$-representable by lemma \ref{L:notRep} and theorem \ref{T:rep}. By a straightforward argument using the definition of the ultrapower and elementary properties of ultrafilters, if $[a]\not\leq [b]$ in $\UPP$ then either
\begin{enumerate}
\item $\{i\in\omega:\pi_y(b(i))<\pi_y(a(i))\}\in U$, and/or
\item $\{i\in\omega:\pi_x(b(i))<\pi_x(a(i))\}\in U$.
\end{enumerate}
There is a $(C,\omega)$-ideal of $\UPP$ containing $[b]$ but not $[a]$ by lemma \ref{L:vertical}, for case 1, and by corollary \ref{P:horizontal} for case 2. Thus $\UPP$ is $(\omega,C)$-representable by theorem \ref{T:rep}. Since an elementary class must be closed under ultraroots we are done.
\end{proof}

\begin{lemma}\label{L:vertical}
Let $[a]\not\leq[b]\in \UPP$ and suppose $\{i\in\omega:\pi_y(b(i))<\pi_y(a(i))\}=u\in U$. Then there is a $(C,\omega)$-ideal of $\UPP$ containing $[b]$ but not $[a]$.
\end{lemma}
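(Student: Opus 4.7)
The plan is to take the separating $(C,\omega)$-ideal of $\UPP$ to be the principal downset of a single meet-prime element, built as the class of a sequence of meet-prime elements of $P$ on the right edge $\pi_x=1$. Since $\pi_y(b(i))<\pi_y(a(i))\leq \omega$ on $u$, the value $\pi_y(b(i))$ is a non-negative rational for each $i\in u$, so I set
\begin{equation*}
t_i := (1,\pi_y(b(i))) \in P'
\end{equation*}
for $i \in u$, extending the definition off $u$ by any fixed element of $\{1\}\times \mathbb{Q}^+$ so that $\pi_x(t_i)=1$ for every $i$. By lemma \ref{L:prime} each $t_i$ is meet-prime in $P$. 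Let $[t]\in\UPP$ denote the class of this sequence and put $\gamma := \{[c]\in\UPP : [c]\leq [t]\}$.

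Separation is then immediate. On $u$, $\pi_x(b(i))\leq 1 = \pi_x(t_i)$ and $\pi_y(b(i)) = \pi_y(t_i)$ give $b(i)\leq t_i$, so $[b]\in\gamma$; and $\pi_y(a(i))>\pi_y(t_i)$ on $u$ forces $a(i)\not\leq t_i$ there, hence $[a]\notin\gamma$. As a principal downset $\gamma$ is automatically downward closed and closed under every existing join of $\UPP$ (any such join is a least upper bound, and $[t]$ is itself an upper bound of any subset of $\gamma$), and it is a proper subset of $\UPP$ since $[t]<[\top]$. What remains is the finite meet condition for $(C,\omega)$-ideals, which is precisely the assertion that $[t]$ is meet-prime in $\UPP$.

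This last step is, in my view, the only real point of the proof, and I would handle it with \L o\'s' theorem. For each fixed $n\in\omega$ the property "$p$ is meet-prime with respect to $n$-element subsets" is a single first-order sentence in the language of posets with free variable $p$: "$y$ is the $n$-ary meet of $x_1,\dots,x_n$" can be written out as a bounded-quantifier formula in $y,x_1,\dots,x_n$, and meet-primeness is then a universal implication over those parameters. Because each fibre $t_i$ satisfies every such sentence by lemma \ref{L:prime}, \L o\'s' theorem transfers each of them to $[t]$, and taking the conjunction over all $n$ yields full meet-primeness of $[t]$ in $\UPP$. The temptation to instead define $\gamma$ directly as $\{[c] : \pi_y([c]) \text{ bounded}\}$ is worth flagging as a dead end: lemma \ref{L:Join} already shows that existing joins in $P$ can jump $\pi_y$ from box $n$ to box $n+1$, so sets of the form $\{p : \pi_y(p)\leq q\}$ need not be closed under existing joins, and the same failure would persist after ultrapowering. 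Working through a meet-prime element sidesteps this because every closure property required of a principal downset is automatic.
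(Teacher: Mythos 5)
Your proof is correct and follows essentially the same route as the paper: the paper likewise defines $c(i)=(1,y_i)$ with $\pi_y(b(i))\leq y_i<\pi_y(a(i))$ (your choice $y_i=\pi_y(b(i))$ being a valid instance), invokes lemma \ref{L:prime} and \L o\'s' theorem to conclude $[c]$ is meet-prime, and takes the principal downset as the separating $(C,\omega)$-ideal. Your additional verifications (that $\pi_y(b(i))\in\mathbb{Q}^+$ on $u$, that the principal downset is proper and join-closed, and that meet-primeness is expressible scheme-wise in first-order logic) are all accurate elaborations of steps the paper leaves implicit.
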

\begin{proof}
For each $i\in u$ pick $y_i\in\mathbb{Q}^+$ so that $\pi_y(b(i))\leq y_i<\pi_y(a(i))$. Define $[c]\in\UPP$ by $c(i)=(1,y_i)$ for all $i\in u$. Then $c(i)$ is meet-prime for all $i\in u$ by lemma \ref{L:prime}, and so $[c]$ is meet-prime by \L o\'s' theorem. So $[c]^\downarrow$ is a $(C,\omega)$-ideal containing $[b]$ but not $[a]$ as required.
\end{proof}

Lemma \ref{L:vertical} shows how we can find $(C,\omega)$-ideals separating elements with different $\pi_y$ values. To complete the proof of theorem \ref{T:main} it remains to construct $(C,\omega)$-ideals separating elements with different $\pi_x$ values. This is more difficult, as our ideals must avoid containing sequences of ultrapower elements whose supremum is $\top$. The solution to this problem is provided by definition \ref{D:G} and proposition \ref{P:main}. 

\begin{lemma}\label{L:Delta}
If $S=\{[s_1],\ldots,[s_n]\}$ is a finite subset of $\UPP$ such that $\bw S$ exists then there is $T\subseteq S$ such that $\bw T = \bw S$ and, if $T =\{[t_1],\ldots,[t_k]\}$, we have $\{i\in\omega:\B(t_1(i))=\B(t_2(i))=\ldots=\B(t_k(i))\}\in U$
\end{lemma}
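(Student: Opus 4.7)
The idea is to transfer Corollary \ref{C:meets} from $P$ to the ultrapower $\UPP$ via \L o\'s' theorem, and then use a pigeonhole argument to collapse the index-by-index choice of witnessing subset into a single fixed subset of $S$.

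For each fixed $n$, the statement ``$x$ is the infimum of $y_1,\ldots,y_n$'' is expressible by a first-order formula in the language of posets, so if $[w] = \bw S$ in $\UPP$ then \L o\'s' theorem gives $A := \{i\in\omega : w(i) = \bw\{s_1(i),\ldots,s_n(i)\}\text{ in }P\} \in U$. For each $i\in A$, Corollary \ref{C:meets} supplies a subset $I_i \subseteq\{1,\ldots,n\}$ such that all $s_j(i)$ with $j\in I_i$ share a common box value, each $s_j(i)$ dominates some $s_{j'}(i)$ with $j'\in I_i$, and $\bw\{s_j(i):j\in I_i\} = w(i)$. Since there are only $2^n$ possible values of $I_i$, the finite union property of $U$ yields some fixed $I\subseteq\{1,\ldots,n\}$ with $B := \{i\in A : I_i = I\} \in U$.

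Setting $T = \{[s_j] : j\in I\}$, on $B$ the elements of $T(i)$ all share a common box value, so $B$ witnesses that the set required by the lemma lies in $U$. It remains to verify $\bw T = \bw S$ in $\UPP$. One direction is immediate from $T\subseteq S$. For the other, $[w]\leq [s_j]$ for all $j\in I$ is clear, and if $[u]\leq [s_j]$ for every $j\in I$ then on a $U$-large set $u(i)\leq s_j(i)$ for all $j\in I$; intersecting with $B$ gives $u(i)\leq \bw\{s_j(i):j\in I\} = w(i)$ on a $U$-large set, whence $[u]\leq [w]$.

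No serious obstacle arises; the argument is a standard pigeonhole-on-ultrafilters move combined with Corollary \ref{C:meets}. The only mildly delicate point is confirming that existence and value of a meet of a \emph{fixed} finite tuple is captured by a first-order formula so that \L o\'s' theorem applies directly to pull the witnessing data back to $U$-large sets of coordinates.
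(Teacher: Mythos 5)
Your argument is correct, but it takes a genuinely different route from the one the paper uses. The paper extends the signature of posets with predicates $\textbf{S}_n$ (``is a sublist of''), $\textbf{glb}_n$, and $\textbf{B}_n$ (``all box values agree''), observes that the first two are definable and that $P$ satisfies the sentences $\phi_n$ asserting that every meet of $n$ elements is witnessed by a sub-tuple with constant $\B$-value (this is Corollary \ref{C:meets}), and then applies \L o\'s' theorem in the extended language; the interpretation of $\textbf{B}_n$ in $\UPP$ is exactly the ``$U$-large agreement of box values'' condition in the lemma, so the conclusion falls out with no explicit ultrafilter combinatorics. You instead apply \L o\'s only to the definable $\textbf{glb}_n$ to get the $U$-large set $A$ on which $w(i)=\bw\{s_1(i),\dots,s_n(i)\}$, invoke Corollary \ref{C:meets} coordinatewise to get witnessing index sets $I_i\subseteq\{1,\dots,n\}$, and then use the pigeonhole property of ultrafilters over the $2^n$ possible values of $I_i$ to extract a single $I$ valid on a $U$-large set $B$; the verification that $\bw T=\bw S$ and that $B$ witnesses the box condition is then routine and correctly carried out. (In effect you are re-proving, for this particular instance, the fact that the existential quantifier in $\phi_n$ can be pulled through the ultraproduct.) The paper's approach is shorter on the page and delegates the combinatorics to \L o\'s' theorem at the cost of introducing an expanded signature and checking how each new predicate is interpreted in $\UPP$; yours is more elementary and self-contained, making the finite pigeonhole explicit. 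Notably, the paper's source contains a commented-out earlier proof in very much your style, so the author evidently regarded both as viable. The only point worth tightening in your write-up is the identification of the subset $T_i\subseteq\{s_1(i),\dots,s_n(i)\}$ from Corollary \ref{C:meets} with an index set $I_i\subseteq\{1,\dots,n\}$ when some coordinates coincide as elements of $P$; any choice of $I_i$ with $\{s_j(i):j\in I_i\}=T_i$ works, so this is cosmetic rather than a gap.
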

\begin{proof}
The strategy is to extend the signature for the language of posets and apply \L o\'s'  theorem. 
\begin{itemize}
\item For each $n\in\omega$ define a $2n$-place predicate $\textbf{S}_n$. The interpretation of this predicate holds in $P$ if and only if the set defined by the assignment of the first $n$ variables is a subset of the set defined by the last $n$ variables. I.e. \begin{equation*}
P\models \textbf{S}_n(x_1,\ldots,x_n,y_1,\ldots,y_n)\iff \{x_1,\ldots,x_n\}\subseteq \{y_1,\ldots,y_n\}\end{equation*}
\item For each $n\in\omega$ define an $n+1$-place predicate $\textbf{glb}_n$. The interpretation of this predicate holds in $P$ if the element defined by the first variable is the meet in $P$ of the set defined by the last $n$ variables. I.e.  \begin{equation*}P\models \textbf{glb}_n(x,y_1,\ldots,y_n) \iff x = \bw \{y_1,\ldots,y_n\}   \end{equation*} 
\item For each $n\in \omega$ define an $n$-place predicate $\textbf{B}_n$. The interpretation of this predicate holds in $P$ if the $\B$ values of every element are equal. I.e. \begin{equation*}P\models \textbf{B}_n(x_1,\ldots,x_n)\iff \B(x_1)=\ldots=\B(x_n) \end{equation*}
\end{itemize}
Note that $\textbf{S}_n$ and $\textbf{glb}_n$ are definable in the standard language of posets. Having defined these predicates, for each $n\in\omega$ we define the following sentence in the extended language.
\begin{align*}
\phi_n=\forall x_1\ldots x_n\forall y\Big(\textbf{glb}_n(y,x_1,\ldots,x_n)\rightarrow \exists z_1\ldots z_n\big(&\textbf{S}_n(z_1,\ldots,z_n,x_1,\ldots,x_n) \\
\wedge& \textbf{B}_n(z_1,\ldots,z_n)\\
\wedge &\textbf{glb}_n(y,z_1,\ldots,z_n)\big)\Big)
\end{align*}
We know that $\phi_n$ holds in $P$ for all $n$ by corollary \ref{C:meets}, so by \L o\'s'  theorem they all hold in $\UPP$. The interpretation of the $\phi_n$ in $\UPP$, taken in sum, is precisely the statement we are trying to prove.  
\end{proof}

\begin{definition}[$\bar{S}$]\label{D:barS} Let $Q$ be any poset, let $I$ be an indexing set, let $U$ be any non-principal ultrafilter of $\wp(I)$ and let $u\in U$. Let $S_i\subseteq Q$ for each $i\in u$ and let $S=(S_i:i\in u)$. Then we define $\bar{S}=\{[a]\in\prod_U Q:\{i\in u:a(i)\in S_i\}\in U\}$. 
\end{definition}
For an alternative perspective on definition \ref{D:barS} we can think of each set $S_i$ as the interpretation of a unary predicate in $Q$. Then $\bar{S}$ is the interpretation of that predicate in $\prod_U Q$.

\begin{lemma}\label{L:ideal}
With notation as in definition \ref{D:barS}, if $S_i$ is an $\omega$-ideal of $Q$ for all $i\in u$ then $\bar{S}$ is an $\omega$-ideal of $\prod_U Q$.
\end{lemma}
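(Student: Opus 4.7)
The plan is to verify, one by one, the defining properties of an $\omega$-ideal for $\bar{S}$ as a subset of $\prod_U Q$, namely non-emptiness, properness, downward closure, closure under existing finite joins, and the meet-primality clause. Throughout, the main tool will be \L o\'s'  theorem, which lets me translate a first order statement about an element $[a]\in\prod_U Q$ into a coordinatewise statement that holds on some $U$-large subset of $I$. Non-emptiness and properness should be immediate: choose $a(i)\in S_i$ and $b(i)\in Q\setminus S_i$ for $i\in u$ to produce explicit witnesses $[a]\in\bar{S}$ and $[b]\notin\bar{S}$.

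For downward closure, I will intersect the $U$-large set $\{i\in u:a(i)\in S_i\}$ with the $U$-large set on which $b(i)\leq a(i)$ (supplied by \L o\'s'  theorem) and apply downward closure of each $S_i$ pointwise. For closure under existing finite joins, if $[d]=\bv\{[a_1],\ldots,[a_n]\}$ exists in $\prod_U Q$, then \L o\'s'  theorem supplies a $U$-large set on which $d(i)$ is the join of the $a_k(i)$ in $Q$; intersecting with the finitely many sets $\{i\in u:a_k(i)\in S_i\}$ and invoking join-closure of each $S_i$ forces $d(i)\in S_i$ on a $U$-large set, so $[d]\in\bar{S}$.

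The delicate step, and the main obstacle, is the meet-primality clause. Given a finite subset $\{[b_1],\ldots,[b_n]\}\subseteq\prod_U Q$ with $\bw\{[b_1],\ldots,[b_n]\}=[m]\in\bar{S}$, \L o\'s'  theorem furnishes a $U$-large set $w$ on which $m(i)$ is the meet in $Q$ of the $b_k(i)$, and $v=\{i\in u:m(i)\in S_i\}\in U$ by assumption. On $v\cap w$, the meet-primality clause for the $\omega$-ideal $S_i$ yields some index $j(i)\in\{1,\ldots,n\}$ with $b_{j(i)}(i)\in S_i$, but the witness $j(i)$ may a priori depend on $i$. The plan here is to partition $v\cap w$ into the finitely many fibers $A_k=\{i\in v\cap w:j(i)=k\}$; since an ultrafilter cannot miss every member of a finite partition of a $U$-large set, some $A_{k_0}$ must lie in $U$, and then $[b_{k_0}]\in\bar{S}$ provides the required uniform witness in $\{[b_1],\ldots,[b_n]\}\cap\bar{S}$.
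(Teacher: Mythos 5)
Your proof is correct and is essentially the paper's argument in expanded form: the paper disposes of the lemma in one line by viewing each $S_i$ as the interpretation of a unary predicate and invoking \L o\'s' theorem, and your coordinatewise verifications (intersecting $U$-large sets, and the finite-partition argument for the meet-primality clause) are exactly what that invocation unpacks to. No gaps.
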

\begin{proof}
This follows easily from the characterization of each $S_i$ as the interpretation of a unary predicate and \L o\'s' theorem.
\end{proof}

\begin{definition}[$\g$]\label{D:G}
Returning to our poset $P$ and the ultrapower $\UPP$, let $u\in U$ and for each $i\in u$ let $r_i\in X_\omega$ and let $\gamma_i$ be an $\omega$-ideal of $P$ of the form $\gamma_i=\{p\in P:\pi_x(p)<r_i\}$. Let $G=(\gamma_i:i\in u)$, and let $f\colon\omega\to\omega$ be a function. Then we define 
\begin{equation*}
\g=\{[a]\in\bar{G}:\exists z\in\mathbb{Z}(\{i\in \omega:\textnormal{\textbf{Box}}(a(i))\leq f(i)+z\}\in U)\}
\end{equation*}
\end{definition}

\begin{proposition}\label{P:main}
Let $f:\omega\to\omega$ and let $\Gamma =\g$. Then $\Gamma$ is a $(C,\omega)$-ideal of $\UPP$.
\end{proposition}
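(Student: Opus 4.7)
Plan: I verify the three defining properties of a $(C,\omega)$-ideal for $\Gamma=\bar{G}\cap B_f$, where $B_f=\{[a]\in\UPP:\exists z\in\mathbb{Z},\,\{i:\B(a(i))\le f(i)+z\}\in U\}$. Non-emptiness, properness, and downward closure are routine: density (lemma \ref{L:X}, P1) produces $a(i)\in X_0\cap(0,r_i)$ paired with $\pi_y=0$, giving $[a]\in\Gamma$; $[\top]\notin\bar{G}$ since $r_i<1$; $\bar{G}$ is downward closed by lemmas \ref{L:verticalIdeal} and \ref{L:ideal}, and the $B_f$-witness of $[a]$ also witnesses any $[b]\le[a]$ since $\B(b(i))\le\B(a(i))$ on a set in $U$. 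Meet-primality uses lemma \ref{L:Delta}: given finite $Y$ with $\bw Y\in\Gamma$, extract $T=\{[t_1],\dots,[t_k]\}\subseteq Y$ with $\bw T=\bw Y$ and common $\B$-value on a set in $U$; corollary \ref{C:meets} applied pointwise gives $\pi_x(\bw T(i))=\min_j\pi_x(t_j(i))$, so a standard finite-partition-of-$U$ argument picks a single $j$ with $\pi_x(t_j(i))<r_i$ on a set in $U$, placing $[t_j]\in\bar{G}$, and the $B_f$-witness for $\bw T$ transfers to $[t_j]$ via the common $\B$-value.

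The substance is closure under arbitrary existing joins. Let $X\subseteq\Gamma$ with $[c]=\bv X$ in $\UPP$; I show $[c]\in\bar{G}$ and $[c]\in B_f$ separately, each by building an upper bound of $X$ incompatible with the minimality of $[c]$. For $[c]\in\bar{G}$: suppose $v=\{i:\pi_x(c(i))\ge r_i\}\in U$. Since $r_i\in X_\omega$ is disjoint from each $X_n$ with $n<\omega$ (lemma \ref{L:X}, P3) and $r_i<1$, the inequality is strict on $v$. Define $c'$ by: $c'(i)=c(i)$ off $v$; on $v\cap\{c(i)\in P'\}$, keep $\pi_y$ and pick $\pi_x(c'(i))\in X_{\B(c(i))}\cap(r_i,\pi_x(c(i)))$ by density; on $v\cap\{c(i)=\top\}$, set $c'(i)=(1,f(i)+i)$, valid since $1\in X_n$ for every $n<\omega$. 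Combining $\pi_x(x_\alpha(i))<r_i<\pi_x(c'(i))$ on $v$ with the $\pi_y$-compatibility (trivial on the $P'$-part from $[x_\alpha]\le[c]$; and on the $\top$-part, $f(i)+i$ eventually exceeds any fixed $f(i)+z_\alpha+1$) yields $[c']\ge[x_\alpha]$ for every $\alpha$, while $[c']<[c]$ by construction, contradicting minimality.

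For $[c]\in B_f$: suppose $\{i:\B(c(i))>f(i)+z\}\in U$ for every $z\in\mathbb{Z}$. The key trick is a diagonal construction. Define $g(i)=\lfloor\sqrt{\B(c(i))-f(i)}\rfloor$ where the radicand is positive, and $g(i)=0$ otherwise; set $c''(i)=(1,f(i)+g(i))\in P$. Then $g\to\infty$ along $U$ because $\B(c(i))-f(i)>K^2$ on a set in $U$ for every $K$; hence for each $\alpha$ with $B_f$-witness $z_\alpha$, $g(i)\ge z_\alpha+1$ on a set in $U$, whence $\pi_y(c''(i))=f(i)+g(i)\ge\pi_y(x_\alpha(i))$, while $\pi_x(c''(i))=1\ge\pi_x(x_\alpha(i))$ trivially. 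So $[c'']\ge[x_\alpha]$ for every $\alpha$, and minimality of $[c]$ forces $[c]\le[c'']$, giving $\pi_y(c(i))\le f(i)+g(i)$, hence $\B(c(i))-f(i)\le g(i)$, on a set in $U$. But by construction $g(i)<\B(c(i))-f(i)$ wherever $\B(c(i))-f(i)\ge 4$, which is $U$-large by assumption, yielding the desired contradiction. The hard step is exactly this $B_f$ verification: the witnesses $z_\alpha$ need not be uniformly bounded in $\alpha$, so the diagonal $g$ must be slow enough that the induced bound $\pi_y(c(i))\le f(i)+g(i)$ collides with the unboundedness of $\B(c(i))-f(i)$, yet fast enough that $c''$ still dominates every $[x_\alpha]$.
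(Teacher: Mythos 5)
Your proof is correct and follows essentially the same route as the paper's: downward closure and the finite-meet condition handled via lemma \ref{L:Delta} with the $B_f$-witness transferred through the common $\B$-value, and join closure by constructing a competing upper bound in each of the cases $[c]\notin\bar{G}$ and $[c]\notin B_f$ (the paper phrases this as its Cases 1--3 on an arbitrary upper bound $[b]$). The only notable difference is cosmetic: where you use the square-root diagonal $c''(i)=(1,f(i)+\lfloor\sqrt{\B(c(i))-f(i)}\rfloor)$, the paper reaches the same contradiction more directly with $b'(i)=(1,\B(b(i))-1)$, which is already an upper bound for $\Gamma$ because $\B(b(i))-f(i)$ is unbounded along $U$, while $\pi_y(b'(i))<\pi_y(b(i))$ gives $[b]\not\leq[b']$ immediately.
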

\begin{proof}
A straightforward argument using properties of ultrafilters and lemma \ref{L:ideal} verifies that $\Gamma$ is closed downwards, so we complete the proof by proving two statements.
\begin{enumerate}
\item ``If $S=\{[s_1],\ldots,[s_n]\}$ is a finite subset of $\UPP$ such that $\bw S$ exists and is in $\Gamma$ then there is $[s]\in S\cap \Gamma$"\\
By lemma \ref{L:Delta} there is $T=\{[t_1],\ldots,[t_k]\}\subseteq S$ with $\bw T=\bw S$ and $\{i\in\omega:\B(t_1(i))=\ldots=\B(t_k(i))\}\in U$. We proceed by proving sub-claims as follows:
\begin{enumerate}
\item ``$[t]\in \bar{G}$ for some $[t]\in T$"\\
We have $\bw T=\bw S \in \bar{G}$. The statement follows by lemma \ref{L:ideal}.
\item ``For the $[t]$ from part (a) we have $[t]\in \Gamma\cap S$"\\
By choice of $T$ and $t$ we have $\{i\in \omega: t(i)\in\gamma_i$,  $\bw \{t_1(i),\ldots,t_k(i)\} = \bw \{s_1(i),\ldots, s_n(i)\}$ and $\B(t_1(i))=\ldots=\B(t_k(i))\}=u_0$ for some $u_0\in U$. Since $\bw T\in\Gamma$ there must be $z\in \mathbb{Z}$ such that $\{i\in u_0: \B(\bw\{t_1(i),\ldots,t_n(i)\})\leq f(i)+z\}=u_1\in U$. Note that for all $i\in u_0$ we have $\B(\bw\{t_1(i),\ldots,t_n(i)\})=\B(t(i))$, and so $\B(t(i))\leq f(i)+ z$ for all $i\in u_1$, and thus $[t]\in \Gamma$ as required. 
\end{enumerate} 
\item ``Let $S\subseteq \Gamma$ and let $[b]$ be an upper bound for $S$. Then either $[b]\in\Gamma$ or there is an upper bound $[b']$ for $S$ with $[b]\not\leq[b']$"\\
We have three cases. In each case we either show that $[b]\in\Gamma$ or construct a suitable $[b']$.
\begin{itemize}
\item Case 1: \emph{$[b]=[\top]$} \\
We construct $[b']$. Here we define $b'$ by $b'(i)=(1,f(i) + i)$. Then $[b']<[b]$. Let $[a]\in\Gamma$. Then $\{i\in\omega:\pi_x(a(i))<r_i\}\in U$, so $\{i\in\omega:\pi_x(a(i))<\pi_x(b'(i))\}\in U$. Also, by definition of $\Gamma$ there is $z\in \mathbb{Z}$ with $\{i\in\omega:\B(a(i))\leq f(i)+z\}\in U$. Since $\{i\in\omega: i> z\}\in U$ we must have $\{i\in \omega : b'(i)>a(i)\}\in U$. So $[b']$ is an upper bound for $\Gamma$, and thus also for $S$ as required.
\item Case 2: \emph{$[b]\neq[\top]$ and $[b]\not\in \bar{G}$}\\
We construct $[b']$. By assumption $\{i\in\omega:\pi_x(b(i))> r_i$ and $b(i)\neq\top\}=u_0$ for some $u_0\in U$. For each $i\in u_0$ we can find $x_i\in X_{\B(b(i))}$ with $r_i<x_i<\pi_x(b(i))$. Define $b'$ by $b'(i)=(x_i,\pi_y(b(i)))$ for $i\in u_0$. Clearly $[b']<[b]$. Moreover, by properties of ultrafilters, if $[s]\in S$ then $\{i\in \omega: \pi_x(b'(i))> \pi_x(s(i))$ and $\pi_y(b'(i))\geq \pi_y(s(i))\}\in U$. So $[b']$ is an upper bound for $S$ as required.
\item Case 3:  $[b]\in \bar{G}$ \\
Suppose $\{i\in\omega : b(i)\in \gamma_i\}=u_0\in U$. If there is $z\in\mathbb{Z}$ such that $\{i\in\omega : \B(b(i))\leq f(i)+z\}\in U$ then $[b]\in\Gamma$ and we are done. So suppose that $\{i\in u_0: \B(b(i))> f(i)+z\}\in U$ for all $z\in \mathbb{Z}$. We construct $[b']$. Let $\{i\in\omega: \B(b(i))> f(i)$ and $b(i)\neq \top\}=u_1\in U$ (we must have $[b]\neq[\top]$ as $[b]\in\bar{G}$). We define $[b']$ by $b'(i)=(1, \B(b(i))-1)$ for $i\in u_1$. Let $[a]\in \Gamma$. Then $\{i\in \omega:\pi_x(a(i))< r_i< 1\}\in U$, and $\{i\in\omega : \pi_y(a(i))\leq f(i)+ z\}\in U$ for some $z\in \mathbb{Z}$. But $\{i\in\omega : \B(b(i))-1>f(i)+z\}\in U$, so $[a]<[b']$ and thus $[b']$ is an upper bound for $\Gamma$ (and so also for $S$). Since $\pi_y(b'(i))<\pi_y(b(i))$ for all $i\in u_1$ we have $[b]\not\leq[b']$ and we are done. 
\end{itemize}
\end{enumerate}
We have now proved claims (1) and (2), thus proving that $\Gamma$ is an $(C,\omega)$-ideal of $\UPP$ as required.
\end{proof}

\begin{corollary}\label{P:horizontal}
Let $[a]\not\leq[b]\in \UPP$ and suppose $\{i\in\omega:\pi_x(b(i))<\pi_x(a(i))\}=u\in U$. Then there is a $(C,\omega)$-ideal of $\UPP$ containing $[b]$ but not $[a]$.
\end{corollary}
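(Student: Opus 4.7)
The plan is to invoke proposition \ref{P:main} directly, so the task reduces to identifying supporting data (a function $f$ and an indexed family of $\omega$-ideals $\gamma_i$) for which the resulting $(C,\omega)$-ideal $\g$ contains $[b]$ but excludes $[a]$. First, for each $i \in u$ the hypothesis $\pi_x(b(i)) < \pi_x(a(i)) \leq 1$ forces $\pi_x(b(i)) < 1$, so density of $X_\omega$ in $(0,1]$ (property P1 of lemma \ref{L:X}) supplies some $r_i \in X_\omega$ with $\pi_x(b(i)) < r_i < \pi_x(a(i))$. Setting $\gamma_i = \{p \in P : \pi_x(p) < r_i\}$ gives an $\omega$-ideal of $P$ by lemma \ref{L:verticalIdeal}, and by construction $b(i) \in \gamma_i$ while $a(i) \notin \gamma_i$ for every $i \in u$.

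For the function $f$, note that $\pi_x(b(i)) < 1$ on $u$ forces $b(i) \neq \top$, so $\B(b(i)) \in \omega$. I would set $f(i) = \B(b(i))$ for $i \in u$ and $f(i) = 0$ elsewhere, which makes $f$ a bona fide function $\omega \to \omega$. With $G = (\gamma_i : i \in u)$, proposition \ref{P:main} immediately yields that $\Gamma = \g$ is a $(C,\omega)$-ideal of $\UPP$.

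It then remains only to verify the two memberships. For $[b] \in \Gamma$: the set $\{i \in u : b(i) \in \gamma_i\} = u$ lies in $U$, placing $[b]$ in $\bar{G}$, and the witness $z = 0$ realises $\{i \in \omega : \B(b(i)) \leq f(i) + z\} \supseteq u \in U$. For $[a] \notin \Gamma$: the set $\{i \in \omega : a(i) \in \gamma_i\}$ is disjoint from $u$ and hence not in $U$, so $[a] \notin \bar{G}$ and therefore $[a] \notin \Gamma$. I do not anticipate any serious obstacle here; proposition \ref{P:main} has absorbed the substantive combinatorial work, and the only point requiring care is the applicability of the density argument, which is secured by the strictness of the inequality in the hypothesis.
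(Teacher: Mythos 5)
Your proposal is correct and follows essentially the same route as the paper: choose $r_i\in X_\omega$ strictly between $\pi_x(b(i))$ and $\pi_x(a(i))$, take $\gamma_i=\{p\in P:\pi_x(p)<r_i\}$, set $f(i)=\B(b(i))$, and apply proposition \ref{P:main} to $\g$. Your explicit verification of $[b]\in\Gamma$ (via $z=0$) and $[a]\notin\bar{G}$ is slightly more detailed than the paper's, but the argument is identical in substance.
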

\begin{proof}
For each $i\in u$ we choose $r_i\in X_\omega$ so that $\pi_x(b(i))<r_i<\pi_x(a(i))$ and define $\gamma_i=\{p\in P:\pi_x(p)<r_i\}$. We let $G=(\gamma_i:i\in u)$, we define $f\colon \omega\to \omega$ by $f(i)=\B(b(i))$, and we generate $\Gamma=\g$ as in definition \ref{D:G}. Then $\Gamma$ is a $(C,\omega)$-ideal by proposition \ref{P:horizontal}, and $[b]\in\Gamma$ by definition. Moreover, $[a]\notin \Gamma$ so we are done.
\end{proof}

\end{section}

\begin{section}{$(\alpha,\beta)$-representations}

Given cardinals $\alpha$ and $\beta$ with $\alpha,\beta>2$ we can expand on this result by using the following variations of definitions \ref{D:mapdefs} and \ref{D:rep}.

\begin{definition}[$(\alpha,\beta)$-morphism]
Given posets $P_1$ and $P_2$ we say a map $f\colon P_1\to P_2$ is an $(\alpha,\beta)$-morphism if $f(\bw S)=\bw f[S]$ whenever $\bw S$ is defined and $|S|<\alpha$, and $f(\bv T)=\bv f[T]$ whenever $\bv T$ is defined and $|T|<\beta$. If $f$ is also injective we say it is an $(\alpha,\beta)$-embedding (note that $f$ will always be order preserving).
\end{definition}

\begin{definition}[$(\alpha,\beta)$-representation]
An $(\alpha,\beta)$-representation of a poset $P$ is an $(\alpha,\beta)$-embedding $h\colon P \to \wp(X)$ for some set $X$ where $\wp(X)$ is considered as a field of sets. When $P$ has a top and/or bottom, we demand that $h$ maps them to $X$ and/or $\emptyset$ respectively.
\end{definition}

For any cardinals $\alpha,\beta>2$ the class of $(\alpha,\beta)$-representable posets can be defined using separation by $(\beta,\alpha)$-ideals in essentially the same manner as in theorem \ref{T:rep} (details can be found in \cite{Egr16}). We note that lemma \ref{L:notRep} uses only binary meets, so actually demonstrates that $P$ is not $(\alpha,\beta)$-representable for all $2<\alpha$ and $\omega<\beta$. Furthermore, it is a trivial consequence of the proof of theorem \ref{T:main} that $\UPP$ is $(\alpha,\beta)$-representable for all $2<\alpha\leq \omega$ and all $\beta$. Consequently, after an appeal to order duality, we obtain the following generalization of theorem \ref{T:main}.

\begin{theorem}\label{T:main2}
Let $\alpha$ and $\beta$ be cardinals with $2<\alpha\leq\omega$ and $\beta>\omega$. Then neither the class of $(\alpha,\beta)$-representable posets, nor the class of $(\beta,\alpha)$-representable posets is elementary. Moreover, the classes of $(\alpha,C)$ and $(C,\alpha)$-representable posets also fail to be elementary. 
\end{theorem}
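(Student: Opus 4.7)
The plan is to recycle the countable poset $P$ from Section \ref{S:build} together with its ultrapower $\prod_U P$, and to obtain every case from one non-representability check, one representability check, and an application of order duality for the parameter-swapped classes.

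For the non-representability half, I would re-examine the proof of Lemma \ref{L:notRep} and confirm that only two structural features of the $(C,\omega)$-ideal $\gamma$ are actually invoked: primeness under the \emph{binary} meet $r\wedge q$ in step (1), and closure under the two \emph{countable} joins $q'=\bv S$ in step (1) and $\top=\bv_{k\in\omega}q_k$ in step (3). Consequently, any $(\beta,\alpha)$-ideal of $P$ with $2<\alpha$ and $\beta>\omega$ carries all the closure and primeness properties the argument uses, so the same separation failure holds. The analogue of Theorem \ref{T:rep} for $(\alpha,\beta)$-representations then forces $P$ to fail to be $(\alpha,\beta)$-representable throughout the range $2<\alpha$, $\beta>\omega$. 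Since a $(C,\alpha)$-ideal is automatically closed under countable joins and $\alpha$-prime for $\alpha>2$, the very same inspection also shows $P$ is not $(\alpha,C)$-representable.

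For the representability half, I would observe that the separating ideals built in the proof of Theorem \ref{T:main} are $(C,\omega)$-ideals in the strongest sense: closed under every existing join and prime under every existing finite meet. These therefore qualify as $(\beta,\alpha)$-ideals for every $\beta$ and every $\alpha\leq\omega$, so $\prod_U P$ is simultaneously $(\alpha,\beta)$- and $(\alpha,C)$-representable throughout the relevant range. Combined with the previous paragraph and \L o\'s' theorem (via closure of elementary classes under ultraroots) this yields the non-elementarity of the classes of $(\alpha,\beta)$- and $(\alpha,C)$-representable posets for $2<\alpha\leq\omega$ and $\beta>\omega$.

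To finish, I would invoke order duality: $Q$ is $(\alpha,\beta)$-representable if and only if $Q^{op}$ is $(\beta,\alpha)$-representable (complement each representing set to swap meets and joins), and the same remark links $(\alpha,C)$- with $(C,\alpha)$-representability. Because taking opposites commutes with ultraproducts and ultraroots, the pair $(P,\prod_U P)$ witnessing non-elementarity in the previous paragraph transports under duality to the pair $(P^{op},\prod_U P^{op})$, delivering the dual statements. The only genuinely delicate step I anticipate is the bookkeeping in the first paragraph, where one has to audit precisely the cardinalities of the meets and joins used inside Lemma \ref{L:notRep}; everything after that is essentially the remark that $(\omega,C)$-representability is the strongest condition on the relevant parameter range and that order duality is a formal correspondence preserving ultraproducts.
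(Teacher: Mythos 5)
Your proposal is correct and takes essentially the same route as the paper: audit Lemma \ref{L:notRep} to see that only binary meets and countable joins are used (so $P$ fails all the relevant representability conditions), note that the separating ideals in the proof of Theorem \ref{T:main} are full $(C,\omega)$-ideals and hence witness every weaker representability condition for $\prod_U P$, and dispatch the parameter-swapped classes by order duality. Your write-up is in fact more explicit than the paper's, which compresses all of this into a single remark preceding the theorem statement.
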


We know from \cite{Egr16} that when $2<\alpha,\beta\leq \omega$ the class of $(\alpha,\beta)$-representable posets is elementary, and also that the class of $(C,C)$-representable posets is not elementary. We can also generalize \cite[theorem 5.1]{Egr16} as follows.

\begin{theorem}
Let $\alpha,\beta\geq\omega_1$. Then none of the classes of $(\alpha,\beta)$, $(\alpha,C)$, $(C,\beta)$, or $(C,C)$-representable lattices is closed under elementary equivalence.
\end{theorem}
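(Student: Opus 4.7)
The plan is to bootstrap the $(C,C)$-case already handled in \cite[theorem 5.1]{Egr16} (see also \cite[theorem 3.2]{EgrHir12}) by means of a cardinality reduction. The ultraproduct method used to establish non-elementarity of the $(C,C)$-representable lattices produces a countable lattice $L$ that is not $(C,C)$-representable together with an ultrapower $\prod_U L$ that is; by \L o\'s' theorem $L \equiv \prod_U L$. I would take such a pair as the witness.

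The key observation is that for a countable lattice $L$ and cardinals $\alpha,\beta\geq\omega_1$, every subset of $L$ has cardinality at most $\omega<\min(\alpha,\beta)$, so the cardinality restrictions $|S|<\alpha$ and $|T|<\beta$ appearing in the definitions of $(\alpha,\beta)$-, $(\alpha,C)$- and $(C,\beta)$-morphism are automatically satisfied by every existing meet and join in $L$. Consequently all four representability notions collapse to $(C,C)$-representability on countable lattices, and in particular our $L$ fails each of them. Meanwhile $\prod_U L$ is $(C,C)$-representable and hence also $(\alpha,\beta)$-, $(\alpha,C)$- and $(C,\beta)$-representable, since $(C,C)$ is the strongest of the four conditions. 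Thus $L\equiv\prod_U L$ witnesses failure of closure under elementary equivalence for all four classes simultaneously.

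The main obstacle — essentially the only one — is ensuring the countability of $L$. The ultraproduct constructions giving the $(C,C)$ non-elementarity result yield countable $L$ by design, so no extra work is needed. If one were instead working from an abstract pair of elementarily equivalent lattices, a downward Löwenheim-Skolem step would be required, and care would be needed since non-representability is not first order; however, the explicit combinatorial obstructions employed in these constructions (in the spirit of lemma \ref{L:notRep} of the present paper) are finitary and hence descend to countable elementary submodels, so the obstacle is not a serious one.
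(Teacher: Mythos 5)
There is a genuine gap: your argument hinges on the existence of a \emph{countable} lattice $L$ that is not $(C,C)$-representable but is elementarily equivalent to a $(C,C)$-representable lattice, and no such witness is supplied or available from the cited results. The known proof that the completely representable (distributive) lattices are not closed under elementary equivalence uses the pair $[0,1]\subset\mathbb{R}$ and $[0,1]\subset\mathbb{Q}$; there the countable member is the \emph{representable} one, which is exactly the wrong side for your cardinality-collapse step (the collapse of $(\alpha,\beta)$-, $(\alpha,C)$-, $(C,\beta)$- and $(C,C)$-representability on countable lattices is correct, but you need it to apply to the non-representable member of the pair). Your fallback via downward L\"owenheim--Skolem also fails: the obstruction to complete representability of $[0,1]\subset\mathbb{R}$ is not finitary --- it is precisely that suprema of down-sets are pinned down by countable sequences from above and below, an inherently uncountable phenomenon --- and indeed every countable elementary sublattice of $[0,1]\subset\mathbb{R}$ is a countable dense chain with endpoints, hence isomorphic to $[0,1]\cap\mathbb{Q}$ and completely representable. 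So non-representability does not descend.

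The paper sidesteps all of this by arguing directly with the uncountable witness: $L=[0,1]\subset\mathbb{R}$ and $L'=[0,1]\cap\mathbb{Q}$ are elementarily equivalent; $L$ admits no non-trivial $(\omega_1,\omega_1)$-ideals (since the supremum of any down-set is approachable by countable sequences from both sides), so $L$ lies in none of the four classes, whose defining ideals would all have to be $(\omega_1,\omega_1)$-ideals; while $L'$ is $(C,C)$-representable via cuts at irrationals and hence lies in all of them. If you want to salvage your strategy you would need to actually construct a countable non-$(C,C)$-representable lattice with a $(C,C)$-representable elementarily equivalent partner, which is a substantial (and unaddressed) task; as it stands the proposal does not prove the theorem.
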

\begin{proof}
Consider $L=[0,1]\subset \mathbb{R}$ and $L'=[0,1]\subset \mathbb{Q}$ considered as distributive lattices under the usual ordering. Then $L$ and $L'$ are elementarily equivalent. Since $\mathbb{R}$ can be constructed from $\mathbb{Q}$ using Dedekind cuts there can be no non-trivial $(\omega_1,\omega_1)$-ideals in $L$, as the supremum of any down-set of $L$ is definable from both above and below by countable sequences. So $L$ cannot be in any of the stated representation classes. However, given $a<b\in L'$ we can take $r\in \mathbb{R}$ with $a<r<b$ and let $\gamma =r^\downarrow\cap L'$. Then $\gamma$ is a $(C,C)$-ideal containing $a$ but not $b$, and we conclude that $L'$ is $(C,C)$-representable (and thus in every stated representation class) by the generalized version of theorem \ref{T:rep} (\cite[theorem 2.7]{Egr16}). Thus the stated classes are not closed under elementary equivalence. 
\end{proof}
\begin{figure}[ht]
\centering

\begin{tabular}{ll|llll}
               & $\alpha$        & \multirow{2}{*}{$<\omega$} & \multirow{2}{*}{$=\omega$} & \multirow{2}{*}{$>\omega$} & \multirow{2}{*}{C}\\ 
$\beta$        &                 &                 &              &                            &                    \\ \hline
\multicolumn{2}{l|}{$<\omega$}   & $E$               & $E$             &  $N$                          &  $NP^*$                  \\
\multicolumn{2}{l|}{$=\omega$}   & $E$               & $E$             & $N$                         & $NP$                  \\
\multicolumn{2}{l|}{$>\omega$}   & $N$              & $N$             & $N$                          & $N$                  \\
\multicolumn{2}{l|}{C}           & $NP^*$               & $NP$             & $N$                          & $NP^*$                 
\end{tabular}
{\footnotesize
\begin{itemize}
\item $E$ = elementary but not finitely axiomatizable
\item $NP$ = pseudoelementary but not elementary
\item $NP^*$ = basic pseudoelementary but not elementary
\item $N$ = not elementary but maybe pseudoelementary
\end{itemize}
}
\caption{Axiomatizing representation classes for various values of $\alpha$ and $\beta$
}
\label{F:elem}
\end{figure}
Figure \ref{F:elem} below summarizes the known results collated from here, \cite{Egr16} and \cite{Egrsub2}. 

\end{section}

\bibliographystyle{amsplain}
\providecommand{\bysame}{\leavevmode\hbox to3em{\hrulefill}\thinspace}
\providecommand{\MR}{\relax\ifhmode\unskip\space\fi MR }
% \MRhref is called by the amsart/book/proc definition of \MR.
\providecommand{\MRhref}[2]{%
  \href{http://www.ams.org/mathscinet-getitem?mr=#1}{#2}
}
\providecommand{\href}[2]{#2}
\FloatBarrier

\end{document}